\documentclass[11pt]{article}
\usepackage[a4paper,margin=2.8cm]{geometry}
\usepackage{amsmath,amssymb,amsthm,mathtools}
\usepackage{enumitem,microtype,hyperref,mathrsfs,tabularx}
\theoremstyle{plain}
\newtheorem{theorem}{Theorem}[section]
\newtheorem{proposition}[theorem]{Proposition}
\newtheorem{lemma}[theorem]{Lemma}
\newtheorem{claim}{Claim}
\newtheorem{corollary}[theorem]{Corollary}
\theoremstyle{definition}
\newtheorem{remark}[theorem]{Remark}

\newcommand{\PP}{\mathbb P}
\newcommand{\QQ}{\mathbb Q}
\newcommand{\ZZ}{\mathbb Z}\newcommand{\CC}{\mathbb C}
\newcommand{\OO}{\mathcal O}\newcommand{\F}{\mathcal F}
\newcommand{\Sing}{\operatorname{Sing}}
\DeclareMathOperator{\ord}{ord}\newcommand{\Gr}{\operatorname{Gr}}

\title{An arithmetic integrability result for codimension-one foliations on complex projective spaces}
\author{V\'ictor Le\'on
\thanks{ILACVN--CICN, Universidade Federal da Integração Latino-Americana,
Parque Tecnológico Itaipu, Foz do Iguaçu--PR, 85867-970, Brazil.
E-mail: \href{mailto:victor.leon@unila.edu.br}{victor.leon@unila.edu.br}}
\and Bruno Sc\'ardua
\thanks{Instituto de Matem\'atica, Universidade Federal do Rio de Janeiro,
CP 68530, Rio de Janeiro--RJ, 21945-970, Brazil.
E-mail: \href{mailto:bruno.scardua@gmail.com}{bruno.scardua@gmail.com}}
}
\date{}
\begin{document}
\maketitle

\begin{abstract}
Let $\F$ be a codimension-one holomorphic foliation of degree $d$ on
$\PP^n$, $n\geq3$, admitting an invariant hyperplane $H$. We study the
extremal situation in which
$S=(H\cap\Sing(\F))_{\rm red}$ is an irreducible hypersurface of $H$
of degree $d+1$. When $d+1$ is a power of a prime, we prove that, in
suitable homogeneous coordinates with $H=(t=0)$,
\[
 \Omega=Q\,dt-\frac{t}{d+1}\,dQ ,
\]
where $Q$ is homogeneous of degree $d+1$. Thus $Q/t^{d+1}$ is a
rational first integral. The proof reduces the Frobenius equation to a
twisted closedness equation on a plane section and uses Zariski's
theorem on the Alexander polynomial of an irreducible plane curve.
We also prove a complementary rigidity theorem for an arbitrary smooth
invariant hypersurface $D\subset\PP^n$: if the reduced singular divisor
on $D$ is smooth, irreducible, and of maximal degree, then the same
normal-form phenomenon holds, with no arithmetic hypothesis on its
degree; in the low-weight range the smoothness assumption on the
singular divisor can be dropped. Finally, we show that the principal
hypotheses are sharp. Dropping the maximal-degree condition yields a
family with irreducible reduced singular support and no non-constant
rational first integral. For every $d+1$ which is not a prime power we
construct a global counterexample with irreducible maximal-degree
singular support, and a final family shows that irreducibility of the
reduced support is also genuinely necessary.
\end{abstract}
\medskip
\noindent\textbf{Keywords.} Holomorphic foliations; invariant hyperplanes;
rational first integrals; Milnor fibers; Alexander polynomial; Kupka
components; cyclic covers.

\smallskip
\noindent\textbf{2020 Mathematics Subject Classification.}
32S65, 37F75, 32S55, 14F35.

\section{Introduction}

This work explores the relation between arithmetics and integrable structures in the complex algebraic framework. In \cite{CerveauLoray} the authors prove the type of Frobenius integrability theorem for singular codimension one holomorphic foliation germs in dimension $\geq 3$. They assume that the tangent cone is irreducible and has degree $p^s$, i.e., a prime-power. This work is dedicated to the search of global versions of this local result. For this we consider  a codimension one (algebraic) holomorphic foliation on the complex projective space $\mathbb P^n, \, n \geq 3$ and assume that the foliation admits an invariant hypersurface where the singular set of the foliation has degree 
a power of a prime. The first question is  whether an invariant
hyperplane $H$ for which the reduced support of
$H\cap\Sing(\F)$ is irreducible of prime-power degree, together with a
non-trivial transverse holonomy condition, forces rational
integrability. We show that this is false without a {\em maximal-degree condition} on the
reduced singular support. On the other hand, the case of maximal degree of the reduced singular
support is rigid.

\begin{theorem}[Rigidity theorem]\label{thm:main}
Let $\F$ be a codimension-one holomorphic foliation of degree $d$ on
$\PP^n$, $n\geq3$, admitting an invariant hyperplane $H$. Suppose that the reduced hypersurface
\(
 S=\bigl(H\cap\Sing(\F)\bigr)_{\mathrm{red}}
\)
is irreducible and
\(
 \deg S=d+1=p^s
\)
for a prime $p$ and $s\geq1$. Then there are homogeneous coordinates
$[t:x_1:\cdots:x_n]$, with $H=(t=0)$, and a homogeneous polynomial
$Q$ of degree $d+1$ such that $\F$ is represented by
\(
 \Omega=Q\,dt-\frac{t}{d+1}\,dQ.
\)
In particular, $Q/t^{d+1}$ is a rational first integral.
\end{theorem}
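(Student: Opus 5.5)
We work in affine coordinates on $\CC^{n+1}$ with $H=(t=0)$. We write the homogeneous integrable $1$-form of degree $d+1$ (coefficients of degree $d+1$, with $i_R\Omega=0$ for the radial field $R$) as
\[
\Omega = A\,dt + t\,\eta + \omega_0 ,
\]
where $A$ has degree $d+1$ and $\omega_0$ is a $1$-form in $dx_1,\dots,dx_n$ with coefficients in $x$ only. Invariance of $H$ means $t\mid \Omega\wedge dt$, so $\omega_0=0$ and $\Omega = A\,dt + t\,\eta$. Contracting with $R$ gives $tA + t\, i_R\eta = 0$, hence $i_R\eta=-A$.

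On $H$ the singular set is cut out by the vanishing of $\Omega$ restricted to $t=0$, that is $\{A|_{t=0}=0\}\cap\{\text{Sing}\}$. Precisely, along $H$ the form is $A_0\,dt$, where $A_0=A(0,x)$, of degree $d+1$. So $S\subset (A_0=0)$. Since $S$ is irreducible of degree exactly $d+1$, we get $A_0=c\,f^{m}$ with $f$ irreducible and $\deg f\cdot m = d+1$. Maximality of $\deg S$ then forces $m=1$ and $A_0=Q_0$ irreducible and reduced. (If $A_0\equiv0$, then $t$ divides $\Omega$, which contradicts saturation.)

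Next we expand $\eta=\eta_0+t\eta_1+\dots$ in powers of $t$ and write the Frobenius condition $\Omega\wedge d\Omega=0$. The $dt$-free part at lowest order gives $\eta_0\wedge d\eta_0=0$ on $H$, and the mixed part gives a relation of the form
\[
Q_0\, d\eta_0 = (\text{const})\, dQ_0\wedge \eta_0 + \dots,
\]
with $i_R\eta_0=-Q_0$. We restrict to a generic plane $\Pi\subset H$ (a $\PP^2$, hence $\CC^3$ in the cone, available because $n\ge3$). There $C=\{Q_0=0\}\cap\Pi$ is an irreducible plane curve of degree $d+1$, and the equation becomes a twisted closedness condition: the form $\eta_0/Q_0$ (suitably normalized) satisfies $d(\eta_0/Q_0^{\lambda})=0$ for some $\lambda$. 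In other words it defines a closed multivalued form, that is a class in the cohomology of the complement $\CC^3\setminus\{Q_0=0\}$, or of the Milnor fibre $Q_0=1$, with a local system of monodromy $e^{2\pi i\lambda}$.

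This is the main obstacle: showing that such a twisted class must be trivial unless $\lambda$ takes the "integrable" value. A nontrivial solution corresponds to an eigenvalue of the monodromy acting on $H^1$ of the Milnor fibre of $Q_0|_\Pi$, hence to a root of the Alexander polynomial of $C$, which is a $(d+1)$-th root of unity different from $1$. Zariski's theorem says that for an irreducible curve of degree $p^s$ the Alexander polynomial is trivial. So no such eigenvalue exists and the twisted class vanishes: $\eta_0 = -\frac{1}{d+1}dQ_0 + Q_0\,\beta$ with $\beta$ closed, hence exact. Homogeneity (via $i_R$ and the Euler relation) forces $\beta=0$. Extending from $\Pi$ back to $H$ uses genericity of the plane (Lefschetz for $\pi_1$ of the complement).

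Finally we propagate to higher orders in $t$. Set $Q=Q_0+tQ_1+\dots$, homogeneous of degree $d+1$ in $(t,x)$, and solve inductively order by order, at each step using the same vanishing of twisted cohomology, so that $\Omega$ matches $Q\,dt-\frac{t}{d+1}dQ$. Alternatively, once the first-order part is known we can change the coordinate $t$ (replace $t$ by $t+\ell(x)$, keeping $H$ fixed) to absorb the remaining freedom. Then $Q/t^{d+1}$ is visibly a first integral, since $d(Q/t^{d+1}) = -(d+1)\,\Omega/t^{d+2}$.
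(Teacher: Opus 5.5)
Your set-up is correct ($\Omega=A\,dt+t\eta$, $i_R\eta=-A$, $A(0,x)=Q_0$ reduced and irreducible), and so is your order-zero conclusion $\eta_0=-\frac1{d+1}dQ_0$. But that step never uses the hypothesis $d+1=p^s$.

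At $t^0$ the Frobenius condition reads $Q_0\,d\eta_0+\eta_0\wedge dQ_0=0$, i.e.\ $d(\eta_0/Q_0)=0$, so the exponent is $\lambda=1$. Put $m=d+1$. The horizontal closed form $\beta=\eta_0/Q_0+\frac1m dQ_0/Q_0$ defines a class in the eigenvalue-one part $H^1(\{Q_0=1\},\CC)_1\simeq H^1(\PP^2\setminus C,\CC)$. This group vanishes for \emph{every} irreducible curve, since $H_1(\PP^2\setminus C,\ZZ)\simeq\ZZ/m\ZZ$. So no root of unity $\neq1$ is involved at this step, contrary to what you say.

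Homogeneity alone also does not give $\beta=0$. Write $\beta=d(G/Q_0^k)$ with $k$ minimal. You need that $Q_0^k\alpha_0=Q_0\,dG-kG\,dQ_0$, where $\alpha_0=Q_0\beta=\eta_0+\frac1m dQ_0$ is polynomial, and then reduce modulo $Q_0$ to force $k=0$.

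A good test is the paper's example for $m=ab$ with $\gcd(a,b)=1$: $\Omega_c=P\,dt-\frac tm dP+c\,t^k\alpha$ with $k=(a-1)(b-1)\ge2$. It satisfies everything you actually prove, including $\eta_0=-\frac1m dP$, yet it has no rational first integral.

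The genuine gap is therefore the step you dismiss as ``solve inductively order by order, using the same vanishing''. All the arithmetic lives there. The missing argument runs as follows.
\begin{enumerate}
\item Normalize $Q_j=\frac{m}{m-j}A_j$, so that $\Omega$ and $\widehat\Omega=Q\,dt-\frac tm dQ$ have the same $dt$-coefficient.
\item Observe that $\Omega-\widehat\Omega=\sum_j t^{j+1}\alpha_j$, where $\alpha_j=\eta_j+\frac1{m-j}dA_j$ is horizontal of weight $m-j$.
\item Read off from the $t^{j+1}dt$-coefficient of $\Omega\wedge d\Omega$ that the first non-zero $\alpha_j$ satisfies $Q_0\,d\alpha_j+\frac{m-j}{m}\alpha_j\wedge dQ_0=0$.
\item For $j\ge1$, the class of $\alpha_j$ on the Milnor fibre of a general plane section lies in the $\zeta^{m-j}$-eigenspace. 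Here $\zeta^{m-j}\neq1$ has order $m/\gcd(m,j)$, which is a non-trivial prime power precisely because $m=p^s$. Only at this point is Zariski's theorem needed.
\item Vanishing of the class is not enough; you must show $\alpha_j=0$. Take a $\zeta^{m-j}$-isotypic primitive and descend it to a weight-$(m-j)$ function $f=G/Q_0^k$ with $\alpha_j=df-\frac{m-j}{m}f\,dQ_0/Q_0$. Minimal denominators force $k=0$ and $Q_0\mid G$, and then $G=0$ because $\deg G=m-j<m$.
\end{enumerate}

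Your fallback of replacing $t$ by $t+\ell(x)$ also fails. It moves $H=(t=0)$, and no coordinate change can absorb the term $c\,t^k\alpha$ in the example above. In fact no coordinate change is needed: the normal form holds in any coordinates with $H=(t=0)$.
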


A noteworthy feature of the proof is that the prime-power condition $d+1=p^s$,
although historically inherited from the Cerveau--Loray theorem \cite{CerveauLoray},
reappears independently and intrinsically in the global problem.
This time it is required in some cohomology computations. 

The arithmetic hypothesis disappears when the singular divisor on the
invariant hypersurface is smooth.  The following  result
will be proved in Section~\ref{sec:smooth-hypersurface}.

\begin{theorem}[Smooth invariant hypersurface]\label{thm:smooth-hypersurface}
Let $\F$ be a codimension-one holomorphic foliation of degree $d$ on
$\PP^n$, $n\geq3$, and let
\(
 D=(F=0)\subset\PP^n
\)
be a smooth invariant hypersurface of degree $e$. Put
\(
 r=d-e+2.
\)
Suppose that
\(
 S=\bigl(D\cap\Sing(\F)\bigr)_{\rm red}
\)
is a smooth irreducible hypersurface of $D$ and is maximal
 \[
 \deg_{\PP^n}S=er.
\]
Then there is a homogeneous polynomial $Q$ of degree $r$ such
that $\F$ is represented by
\[
 \Omega=Q\,dF-\frac er F\,dQ.
\]
In particular,
\[
 \frac{Q^e}{F^r}
\]
is a rational first integral of $\F$.
\end{theorem}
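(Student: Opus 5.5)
We will work with a homogeneous integrable $1$-form $\Omega$ of degree $d+1$ on $\CC^{n+1}$ with $i_R\Omega=0$, where $R$ is the radial vector field. Since $D=(F=0)$ is invariant, we have $\Omega\wedge dF=F\,\eta$ for a homogeneous $2$-form $\eta$.

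\textbf{Step 1: the normal form along $D$.} Because $D$ is smooth, $dF$ does not vanish on $F=0$ away from the origin. We can therefore divide and write
\[
\Omega = A\,dF + F\,\beta
\]
with $A$ homogeneous of degree $d+2-e$ and $\beta$ a $1$-form. Concretely, take the class of $\Omega$ modulo $dF$ on the complete intersection locus and use the Koszul complex of $(F_0,\dots,F_n)$, which is a regular sequence. Contracting with $R$ gives
\[
0=eAF+F\,i_R\beta,
\]
so $i_R\beta=-eA$.

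On $D$ the singular set of $\F$ is exactly $\{A=0\}\cap D$, together with the zeros of $\beta|_D$ where $A=0$; note that $\Omega|_D=F\beta$ vanishes to first order there. We have
\[
\deg(A|_D)=e(d+2-e)=er \text{ as a degree in } \PP^n .
\]
So the maximality hypothesis says that $S_{\rm red}=\{A=0\}\cap D$ with multiplicity one. Combined with the assumption that $S$ is smooth and irreducible, this means $A|_D$ is reduced and defines a smooth hypersurface of $D$. Modifying $A$ by multiples of $F$ (and $\beta$ accordingly), we take $Q:=A$, with $\deg Q=r$ when $r\geq e$; in general we only use the class of $A$ modulo $F$.

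\textbf{Step 2: integrability and the transverse structure.} Write
\[
\Omega=Q\,dF-\tfrac{e}{r}F\,dQ+F\,\gamma .
\]
Since $i_R(Q\,dF-\tfrac er F\,dQ)=eQF-eFQ=0$, we get $i_R\gamma=0$. The goal is to show $\gamma\in F\cdot(\ldots)$ repeatedly, and ultimately that $\gamma$ can be absorbed, i.e.\ $\gamma=0$ after adjusting $Q$.

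Expand the integrability condition $\Omega\wedge d\Omega=0$ and restrict to $F=0$. The leading-order terms give a condition on $\gamma|_D$ of the form
\[
\bigl(Q\,d\gamma-\lambda\,dQ\wedge\gamma\bigr)\wedge dF\equiv 0 \pmod F ,
\]
for a suitable constant $\lambda$ (determined by $e$ and $r$). In other words, $Q^{-\lambda}\gamma|_D$ is a closed twisted form on $D\setminus S$. Here we use smoothness of $S$ and $D$: near $S$ the foliation has a Kupka-type normal form. The Kupka set is the locus where $d\Omega\neq0$, and along $S$ we have $d\Omega\wedge dF\ne0$ because $dQ\wedge dF\neq0$ by smoothness of $S\subset D$. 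So $\F$ is locally a product of a planar saddle/node $Q\,dF-\tfrac er F\,dQ$ with a disc, and this constrains $\gamma$ near $S$.

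\textbf{Step 3: cohomology vanishing (main obstacle).} We must show that the twisted closed form $\gamma|_D$ is exact in the appropriate sense, i.e.\ equals $Q\,dg-\lambda g\,dQ$ for a homogeneous $g$. Once this holds, we replace $Q$ by $Q+cF g$ and iterate in powers of $F$. This is where the prime-power hypothesis was needed in Theorem~\ref{thm:main}. Here, instead, I would use the smoothness of $S\subset D$ and Bott/Lefschetz-type vanishing:
\begin{enumerate}
\item $H^1(D\setminus S)$ with coefficients in a rank-one local system of monodromy $e^{2\pi i\lambda}$ vanishes for a smooth ample divisor $S$ in $D$ with $\dim D\geq2$, since $n\geq3$. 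This follows from the cyclic cover $\{y^r=Q\}$ over $D$, which is smooth, and Lefschetz gives $H^1=0$.
\item Equivalently, at the algebraic level, the Koszul complex of $(F,Q)$ together with Griffiths' residue description shows that the relevant graded piece of $H^1$ of the complement is zero.
\end{enumerate}
I expect the delicate part to be bookkeeping of degrees so that the primitive $g$ exists with the right degree. A residual obstruction in degree $0$ can be killed using $i_R\gamma=0$.

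\textbf{Step 4: conclusion.} Once $\gamma$ is absorbed, $\Omega=Q\,dF-\tfrac er F\,dQ$. The closing computation is that
\[
d\!\left(\frac{Q^e}{F^r}\right)=\frac{Q^{e-1}}{F^{r+1}}\bigl(eF\,dQ-rQ\,dF\bigr)=-\frac{rQ^{e-1}}{F^{r+1}}\,\Omega ,
\]
which shows $Q^e/F^r$ is a rational first integral.
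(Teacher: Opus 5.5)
Your skeleton matches the paper's: the decomposition $\Omega=A\,dF+F\beta$, the twisted closedness equation at the first non-vanishing order in $F$, and a cyclic cover of $D$ branched along the smooth divisor $S$. But Step~3, and the induction it is supposed to drive, do not work as stated, and that is where the proof lives.

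There are two ways to read ``exact in the appropriate sense'', and neither closes the step.
\begin{itemize}
\item If it means $\gamma|_D=Q\,dg-\lambda g\,dQ$ with $g$ a homogeneous polynomial, weights force $\deg g=q-r\le0$. So $g$ is either zero or a constant, and the constant is killed by $i_R\gamma=0$. Also $Q+cFg$ does not even have degree $r$.
\item If it means $\gamma=df-\lambda f\,dQ/Q$, then $H^1=0$ only produces $f$ as a regular function on $\widehat D\setminus\{Q=0\}$. You get it by taking the $\zeta^q$-isotypical part of the primitive on $M=\{F=0,\,Q=1\}$ and descending $\rho^q g(u)$. You give no argument that $f$ is polynomial.
\end{itemize}
The missing idea is the minimal-denominator step. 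Write $f=G/Q^k$ with $G\in\CC[x]/(F)$ and $k$ minimal, so that $Q^{k+1}\gamma=Q\,dG-\bigl(k+\frac qr\bigr)G\,dQ$. Since $Q$ is prime in $\CC[x]/(F)$ and $dQ$ is not identically zero along $S$, reduction mod $Q$ gives $Q\mid G$. Hence $k=0$ and $G\in\{0,cQ\}$, i.e.\ $\gamma|_D=0$ identically. So the cohomological step yields vanishing on $D$, not a correction term for $Q$.

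The correction to $Q$ comes from a piece you never isolate. From $\gamma|_D=0$ you get $\gamma=h\,dF+F\beta'$ with $\deg h=q-e$ and $i_R\beta'=-eh$. You then absorb $h$ by $Q\mapsto Q+\frac{r}{q_j-e}F^{j+1}h$, using that $\Omega_Q=Q\,dF-\frac erF\,dQ$ is linear in $Q$. This leaves a horizontal remainder $F^{j+2}\bigl(\beta'+\frac{e}{q_j-e}\,dh\bigr)$ of weight $q_{j+1}=q_j-e$.

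The twist at stage $j$ is $\lambda=q_j/r$ with $q_j=r-je$. So $\lambda$ changes with $j$, and the relevant monodromy characters are in general non-trivial. The process ends once $q_j\le e$, where weight and horizontality alone force the remainder to vanish.

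Finally, Lefschetz applied to the compact cover $X=\{F=0,\ y^r=Q\}\subset\PP^{n+1}$ gives only $H^1(X,\CC)=0$. To conclude $H^1(M,\CC)=H^1(X\setminus R_X,\CC)=0$ you still need the Gysin sequence with $[R_X]\neq0$ in $H^2(X,\CC)$. That is what controls the invariant part $H^1(D\setminus S,\CC)$.
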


Just one qualification about the maximality hypothesis above: 
the maximality hypothesis 
\(\deg_{\PP^n}S=er\) 
is equivalent to 
\(
 S\in |\mathcal O_D(r)|.
\)
Here $|\mathcal O_D(r)|$ denotes the complete linear system of divisors
on $D$ cut out by sections of $\mathcal O_D(r)$, equivalently by
hypersurfaces of degree $r$ in $\PP^n$. In the present setting the
intersection $D\cap\Sing(\F)$ is cut on $D$ by a section of
$\mathcal O_D(r)$; hence its divisor has projective degree $er$.
Since $S$ is the reduced support of that divisor, the maximality
condition above is exactly the multiplicity-one case, and is therefore
equivalently expressed by
\[
 \deg_{\PP^n}S=er.
\]

  When
$e=1$, Theorem~\ref{thm:smooth-hypersurface} specializes to the
hyperplane situation.  On the overlap where the singular divisor $S$
is smooth, it is stronger than Theorem~\ref{thm:main}, since no
prime-power assumption is required.  On the other hand,
Theorem~\ref{thm:main}  allows $S$ to be
irreducible and singular, provided that
$\deg S=d+1$ is a prime power.

In the low-weight range we obtain:

\begin{corollary}[Quadric and cubic cases]\label{cor:quadric-cubic}
Let $\F$ be a codimension-one holomorphic foliation of degree $d$ on
$\PP^n$, $n\geq3$, and let
\(
 D=(F=0)\subset\PP^n
\)
be a smooth invariant hypersurface of degree $e$. Put
\(
 r=d-e+2.
\)
Assume that
\(
 S=\bigl(D\cap\Sing(\F)\bigr)_{\rm red}
\)
is reduced and irreducible, not necessarily smooth, and is maximal, 
\(
 \deg_{\PP^n}S=er.
\)
If
\(
 0<r\le e,
\)
then there is a homogeneous polynomial $Q$ of degree $r$ such that
\(
 \Omega=Q\,dF-\frac er F\,dQ,
\)
and $Q^e/F^r$ is a rational first integral of $\F$.
In particular, in every dimension $n\ge3$, this applies to a smooth
invariant quadric for $d=1,2$, and to a smooth invariant cubic
hypersurface for $d=2,3,4$.
\end{corollary}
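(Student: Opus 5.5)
The plan is to use the low degree $r\le e$ as the replacement for smoothness of $S$. In this range polynomial forms of low degree cannot be divisible by $F$. This lets congruences modulo $F$ be upgraded to identities, and it lets the correction term be killed at the end purely by degree.

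\emph{Step 1: decomposition.} Let $\Omega$ be the homogeneous integrable $1$-form of degree $d+1$ on $\CC^{n+1}$ representing $\F$, with $i_R\Omega=0$ for the radial field $R$. Since $D=(F=0)$ is invariant and smooth, de Rham--Saito division gives
\[
 \Omega=A\,dF+F\,\eta ,
\]
with $A$ homogeneous of degree $d+2-e=r$ and $\eta$ a polynomial $1$-form whose coefficients have degree $r-1$. Along $D$ we have $\Omega|_D=A\,dF$ and $dF\neq0$, so $D\cap\Sing(\F)$ is cut on $D$ by $A$.

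\emph{Step 2: use the hypotheses on $S$ and compute the integrability constraint.} By maximality and reducedness/irreducibility of $S$, the divisor $(A=0)\cap D$ is exactly $S$ with multiplicity one. The Euler relation $i_R\Omega=0$ gives $eAF+F\,i_R\eta=0$, hence $i_R\eta=-eA$. Expanding $\Omega\wedge d\Omega=0$ and reducing modulo $F^2$, the surviving terms give
\[
 dF\wedge\bigl(A\,d\eta-\eta\wedge dA\bigr)\equiv0 \pmod F .
\]
This is a twisted closedness equation for $\omega=\eta|_D$ relative to $A$. Up to the sign to be checked, it says that $\omega/A$ (or a twist $A^{k}\omega$) is a closed rational $1$-form on the cone over $D$ with poles only along the cone over $S$.

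\emph{Step 3: show $\eta\equiv\lambda\,dA$ modulo $(F,dF)$.} This is the step I expect to be the main obstacle, since smoothness of $S$ is no longer available.

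1. Since $S$ is irreducible, the closed logarithmic form $\omega/A$ has a single residue $c$ along $S$.
2. The form $\omega/A-c\,dA/A$ is then closed and has no residue. Since $D$ is a smooth hypersurface of dimension $\ge2$, Lefschetz gives $H^1(D)=0$. Homogeneity of degree $0$ (contract with $R$) should then force it to be exact, of the form $dg$ with $g$ homogeneous of degree $0$ and regular off $S$. Hence $g$ is constant and the form vanishes.
3. In the singular case the worry is that $\omega/A-c\,dA/A$ could still have non-logarithmic poles along $\Sing S$. Here I would try the degree bound: $\omega$ has coefficients of degree $r-1<e$, so no coefficient can be corrected by multiples of $F$. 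A pole-order comparison, or equivalently division of $\omega\wedge dA\wedge dF$ by the reduced irreducible $A$ on the normal variety $D$, should rule this out.

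The conclusion is $\eta=\lambda\,dA+F\beta+\mu\,dF$.

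\emph{Step 4: normalize and conclude.} The term $\mu\,dF$ can be absorbed by replacing $A$ with $A+F\mu$, which is legitimate because $\deg\mu=r-e\le0$, so $\mu$ is a constant, or zero when $r<e$. The coefficients of $\beta$ have degree $r-1-e<0$, so $\beta=0$. Hence
\[
 \Omega=A\,dF+\lambda F\,dA .
\]
The Euler relation then gives $eA+\lambda rA=0$, so $\lambda=-e/r$. Setting $Q=A$ yields $\Omega=Q\,dF-\frac er F\,dQ$. A direct computation gives
\[
 d\bigl(Q^e/F^r\bigr)=\frac{eQ^{e-1}}{F^{r+1}}\,\Bigl(F\,dQ-\frac re\,Q\,dF\Bigr)=-\frac{rQ^{e-1}}{F^{r+1}}\,\Omega ,
\]
so $Q^e/F^r$ is a rational first integral. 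The final sentence of the statement is just the check that $(e,d)=(2,1),(2,2)$ and $(3,2),(3,3),(3,4)$ give $0<r\le e$.
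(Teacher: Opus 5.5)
Your overall structure matches the paper's: the decomposition $\Omega=A\,dF+F\eta$, then the first-order integrability equation along $D$. With the correct sign that equation reads $A\,d\eta+\eta\wedge dA=0$ on $\widehat D$, i.e.\ $\eta/A$ is closed. After that come a vanishing statement for this untwisted remainder and a degree count using $r\le e$. The gap is in Step~3, which carries all the content.

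First, $\theta=\omega/A-c\,dA/A$ lives on the cone. Since $i_R\eta=-eA$, it satisfies $i_R\theta=-e-cr$, so it descends to $D\setminus S$ only if $c=-e/r$, and nothing in your sketch determines the residue. $H^1(D)=0$ alone does not do it. The period of $\theta$ along the orbit loop $s\mapsto e^{2\pi i s}u$ is $2\pi i(-e-cr)$. To see that it vanishes you must know that a meridian of $S$ is rationally $1/r$ times this loop; given $H_1(D)=0$, this amounts to $H^1(D\setminus S,\CC)=0$. That is exactly the Claim inside the paper's proof of Lemma~\ref{lem:untwisted-irreducible}: $\pi_1(D)=0$ makes $H_1(D\setminus S,\ZZ)$ cyclic, generated by a meridian $\mu$, and a general complete-intersection curve meeting $S$ in $re$ points gives $re[\mu]=0$.

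Second, ``$g$ homogeneous of degree $0$ and regular off $S$, hence constant'' is false. $D\setminus S$ is affine and carries many such functions, e.g.\ $G/A^k$ with $\deg G=kr$. What is needed is the minimal-denominator argument. Take $g$ regular via algebraic de Rham, not merely holomorphic, and write $g=G/A^k$ in $A_D[A^{-1}]$ with $k$ minimal. Since $dg=\theta$ has at most a simple pole along the prime $A$ and $dA\not\equiv0\pmod A$, one gets $A\mid G$, so $k=0$ and $g$ is constant. This is the paper's argument with $f=Ag$, since on $\widehat D$ one has $\eta+\frac er\,dA=A\,d(f/A)$.

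Your point~3 addresses a non-issue. $\omega/A$ has at most a first-order pole along the whole cone over $S$ because $\omega$ is polynomial. Also, $\Sing S$ has codimension at least $2$ in $D$, so it affects neither the residue nor the topology; meridians can be taken at smooth points. The bound $r-1<e$ plays no role in the vanishing. In the paper, $r\le e$ is used only to stop the $F$-adic induction after the first step ($q_1=r-e\le0$), so the twisted vanishing, which is what needs smooth $S$, is never invoked.

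Finally, in Step~4, replacing $A$ by $A+\mu F$ does not absorb $\mu\,dF$ when $r=e$, because then $\lambda=-1$. Use instead the Euler relation $(e+\lambda r)A+e\mu F=0$ together with $A\notin\CC F$; this gives $\lambda=-e/r$ and $\mu=0$, which is the paper's ``horizontality forces $h_j=0$''.
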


Our results are geometrically  related to the classical theory of codimension-one
foliations with complete-intersection Kupka components, where rational
first integrals arise under suitable geometric hypotheses; see
Cerveau--Lins Neto \cite{CLNKupka}. 
Theorem~\ref{thm:main} though  is formulated
 without any smoothness assumption on the reduced singular support $S\cap H$ 
In particular, the theorem applies when the irreducible support $S$ is
singular. In Section~\ref{sec:sharpness} we give examples showing that our hypotheses  cannot be dropped. 
\section{Homogeneous 1-forms and projective foliations}
\label{section:homogeneous}
A codimension one holomorphic foliation with singularities on a complex manifold $M$ consists of a pair $\mathcal F= (\mathcal F_0, S)$ where $S\subset M$ is a codimension $\geq 2$ analytic subset of $M$ and $\mathcal F_0$ is a (nonsingular) codimension one holomorphic foliation on the open manifold $M\setminus S$. We may assume that $S\subset M$ is minimal in the sense that $\mathcal F_0$ does not extend as a nonsingular holomorphic foliation to any point $p \in S$. We shall call $S$ the singular set of $\mathcal F$ and write $\Sing(\mathcal F)=S$. In a complex projective space $\mathbb P^n, n \geq 3$ such foliations admit the following description (\cite{LinsNetoScardua}). 

Put $m=d+1$.  In homogeneous coordinates
$z=(z_0,\ldots,z_n)$, a degree-$d$ codimension-one foliation on
$\PP^n$ is represented by a homogeneous polynomial $1$-form
\[
 \Omega=\sum_{j=0}^n A_j(z)\,dz_j,
 \qquad \deg A_j=m=d+1.
\]
The 1-form $\Omega$ must satisfy the following conditions. 
If
\[
 R=\sum_{j=0}^n z_j\frac{\partial}{\partial z_j}
\]
denotes the Euler (radial) vector field, the projective condition is
the horizontality identity
\[
 i_R\Omega=\sum_{j=0}^n z_jA_j=0.
\]
The Frobenius integrability condition is
\[
 \Omega\wedge d\Omega=0,
\]
and we take $\Omega$ {\it primitive} or {\it saturated}, meaning that
\[
 \gcd(A_0,\ldots,A_n)=1.
\]
More generally, throughout the paper a homogeneous polynomial
$1$-form $\alpha$ will be  called \emph{horizontal} if
\[
 i_R\alpha=0.
\]
Equivalently, $\alpha_z(z)=0$ at every point $z$.
 
Part of our strategy is based on the following. 
Choose coordinates $[t:X]=[t:x_1:\cdots:x_n]$ with $H=(t=0)$. Since
$H$ is invariant,
\begin{equation}\label{eq:decomp}
 \Omega=A(t,X)\,dt+t\,\beta(t,X),
\end{equation}
where $\beta$ is a polynomial $1$-form involving only the $dX$-directions.
If $R_H=\sum x_i\partial_{x_i}$, the Euler condition gives
\begin{equation}\label{eq:euler}
 A=-i_{R_H}\beta.
\end{equation}
Let
\[
 S_{\mathrm{red}}=\bigl(H\cap\Sing(\F)\bigr)_{\mathrm{red}}
\]
denote the reduced analytic hypersurface underlying the intersection.
Under the hypotheses of Theorem~\ref{thm:main}, $S_{\mathrm{red}}$ is
irreducible of degree $m$. Since $A(0,X)$ is homogeneous of degree
$m$ and its reduced zero set is precisely $S_{\mathrm{red}}=(P=0)$,
with $P$ reduced irreducible of degree $m$, we have
\[
 A(0,X)=cP(X)
\]
for some $c\in\CC^*$. After multiplying $\Omega$ by a non-zero
constant, we may assume $A(0,X)=P(X)$. Thus in the extremal case
$\deg S_{\mathrm{red}}=m=d+1= p^s$ the transverse multiplicity along
$S_{\mathrm{red}}$ is automatically one.

Expand
\[
 \beta=\sum_{j=0}^{m-1}t^j\beta_j,\qquad
 A=\sum_{j=0}^{m-1}t^jA_j.
\]
Then $A_j=-i_{R_H}\beta_j, \, j=0,...,m-1$. Define
\[
 Q_j=\frac{m}{m-j}A_j,\qquad
 Q=\sum_{j=0}^{m-1}t^jQ_j,
\]
and
\begin{equation}\label{eq:model}
 \widehat\Omega=Q\,dt-\frac{t}{m}dQ.
\end{equation}
The $dt$-coefficients of $\Omega$ and $\widehat\Omega$ coincide and
\[
 \widehat\Omega=-\frac{t^{m+1}}m
 d\left(\frac{Q}{t^m}\right).
\]
Set
\[
 \alpha_j=\beta_j+\frac{1}{m-j}d_HA_j.
\]
Then
\begin{equation}\label{eq:difference}
 i_{R_H}\alpha_j=0,\qquad
 \Omega-\widehat\Omega=\sum_{j=0}^{m-1}t^{j+1}\alpha_j.
\end{equation}
Using these computations we obtain:
\begin{lemma}[First-deviation equation]\label{lem:first}
Let  $\alpha_j$ be the first non-zero term in \eqref{eq:difference},
then
\begin{equation}\label{eq:twisted}
 P\,d\alpha_j+\frac{m-j}{m}\alpha_j\wedge dP=0.
\end{equation}
\end{lemma}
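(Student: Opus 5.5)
The plan is to expand the Frobenius condition $\Omega\wedge d\Omega=0$ in powers of $t$ and read off the lowest-order coefficient containing $dt$. Write $\Omega=\widehat\Omega+\eta$ with
\[
 \eta=\sum_{k\ge j}t^{k+1}\alpha_k=t^{j+1}\alpha_j+O(t^{j+2}),
\]
which is legitimate by \eqref{eq:difference} since $\alpha_j$ is the first non-zero term. Here each $\alpha_k$ is a $1$-form in the $dX$-directions whose coefficients depend only on $X$, because $\beta_k$ and $A_k$ do. Expanding gives
\[
 0=\Omega\wedge d\Omega=\widehat\Omega\wedge d\widehat\Omega+\widehat\Omega\wedge d\eta+\eta\wedge d\widehat\Omega+\eta\wedge d\eta .
\]
The first term vanishes because $\widehat\Omega=-\frac{t^{m+1}}{m}d(Q/t^m)$ is a function multiple of a closed form. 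Directly, $d\widehat\Omega=\frac{m+1}{m}\,dQ\wedge dt$, and $\widehat\Omega\wedge dQ\wedge dt=0$.

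Next I will isolate the coefficient of $t^{j+1}$ in the remaining terms, using $Q=P+O(t)$ and $dQ=d_HP+O(t)+(\ast)\,dt$. The relevant pieces of $d\eta$ are $(j+1)t^j\,dt\wedge\alpha_j+t^{j+1}d_H\alpha_j+O(t^{j+2})$.

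\emph{Contribution of $\widehat\Omega\wedge d\eta$.} The term $P\,dt\wedge(j+1)t^j dt\wedge\alpha_j$ vanishes. The surviving order-$t^{j+1}$ pieces are $t^{j+1}P\,dt\wedge d\alpha_j$ and $-\frac{j+1}{m}t^{j+1}dP\wedge dt\wedge\alpha_j=\frac{j+1}{m}t^{j+1}dt\wedge dP\wedge\alpha_j$.

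\emph{Contribution of $\eta\wedge d\widehat\Omega$.} It equals $\frac{m+1}{m}t^{j+1}\alpha_j\wedge dP\wedge dt+O(t^{j+2})$, and $\alpha_j\wedge dP\wedge dt=dt\wedge\alpha_j\wedge dP$.

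\emph{Contribution of $\eta\wedge d\eta$.} It starts at order $t^{2j+1}$. This is only potentially relevant when $j=0$, where the offending term is $t\,\alpha_0\wedge dt\wedge\alpha_0=0$. So it contributes nothing at order $t^{j+1}$.

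Collecting, the $t^{j+1}$-coefficient of $\Omega\wedge d\Omega$ is
\[
 dt\wedge\Bigl(P\,d\alpha_j+\frac{(m+1)-(j+1)}{m}\,\alpha_j\wedge dP\Bigr)
\]
plus a $dt$-free part, which we may ignore. Since the bracket is a $2$-form in the $dX$-directions alone with coefficients depending only on $X$, wedging with $dt$ is injective on such forms. Hence the bracket vanishes, which is exactly \eqref{eq:twisted} with coefficient $\frac{m-j}{m}$.

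The only real obstacle is bookkeeping: correctly tracking signs and verifying that no other term, such as $Q_1$-corrections in $Q$ or the $dt$-component of $dQ$, contributes to the $dt\wedge(\cdot)$ part at order $t^{j+1}$. Those corrections carry an extra power of $t$, or produce $dt\wedge dt=0$. I will check this by separating each form into its $dt$-part and $dX$-part before comparing orders.
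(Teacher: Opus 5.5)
Your proposal is correct and follows the paper's proof. Both expand $\Omega\wedge d\Omega$ around the integrable model $\widehat\Omega$, extract the $dt\wedge(\cdot)$ coefficient of $t^{j+1}$, and use injectivity of $dt\wedge$ on $dX$-forms; your treatment of $\eta\wedge d\eta$ (including the $j=0$ case) spells out what the paper leaves implicit.

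One small slip: $d\eta$ also contains $(j+2)t^{j+1}\,dt\wedge\alpha_{j+1}$, which is not $O(t^{j+2})$. It is harmless, because the $A\,dt$ part of $\widehat\Omega$ annihilates it and every other wedge with it acquires an extra factor of $t$.
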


\begin{proof}
Write
\[
 \Omega=\widehat\Omega+t^{j+1}\alpha_j+O(t^{j+2}).
\]
Since $\widehat\Omega$ is integrable,
\[
 d(t^{j+1}\alpha_j)
 =(j+1)t^jdt\wedge\alpha_j+t^{j+1}d_H\alpha_j.
\]
Using
\[
 \widehat\Omega=A(t,X)\,dt-\frac{t}{m}dP+O_H(t^2),\qquad
 d\widehat\Omega=-\frac{m+1}{m}dt\wedge dP+O(t),
\]
where $O_H(t^2)$ denotes a $1$-form involving only the $dX$-directions
and divisible by $t^2$, the coefficient of $t^{j+1}$ in
$\Omega\wedge d\Omega$ is
\[
 dt\wedge\left(
 P\,d\alpha_j+\frac{m-j}{m}\alpha_j\wedge dP\right).
\]
The expression in parentheses is a $2$-form involving only the
$dX$-directions. Since wedging with $dt$ is injective on such forms, the
expression in parentheses itself vanishes.
\end{proof}

\section{Twisted vanishing on a plane}
\label{subsection:zariskivanishing}
We now isolate the cohomological vanishing principle that underlies the proof of Theorem~\ref{thm:main}, starting with the case of an irreducible plane curve.
For this we shall first recall some classical notions and some specific facts from Complex Algebraic Geometry. 
\subsection{Milnor fiber, monodromy, and the cohomological vanishing}

We recall the standard facts that will be used in the proof below.
Let $P\in\CC[x,y,z]$ be a reduced irreducible homogeneous polynomial
of degree $m$, and let
\[
 C=(P=0)\subset\PP^2.
\]
Its global Milnor fiber is
\[
 F=P^{-1}(1)=\{P=1\}\subset\CC^3.
\]
It is smooth: if $R=x\partial_x+y\partial_y+z\partial_z$ is the Euler
field, then Euler's identity gives
\[
 i_RdP=mP,
\]
so $dP$ cannot vanish on $P^{-1}(1)$.

The polynomial $P$ defines the global Milnor fibration
\[
 P:\CC^3\setminus P^{-1}(0)\longrightarrow\CC^*.
\]
Let
\[
 \zeta=e^{2\pi i/m}.
\]
A positive generator of $\pi_1(\CC^*)$ is represented by
$\gamma(t)=e^{2\pi it}$, $0\le t\le1$.  By homogeneity,
$P(\lambda u)=\lambda^mP(u)$, so for $u\in F$ the path
\[
 u(t)=e^{2\pi it/m}u
\]
lifts $\gamma$.  Hence the geometric monodromy of the global Milnor
fiber is the linear map
\[
 h:F\longrightarrow F,\qquad h(u)=\zeta u.
\]
The linearity here is a consequence of the homogeneity of $P$.

For $\lambda\in\CC^*$ we write
\[
 H^1(F,\CC)_\lambda
 =
 \{\xi\in H^1(F,\CC):h^*\xi=\lambda\xi\}
\]
for the $\lambda$-eigenspace of the monodromy action on cohomology.
If $\alpha$ is a homogeneous polynomial $1$-form of weight $q$, then
\[
 h^*(\alpha|_F)=\zeta^q\alpha|_F.
\]
Thus, whenever $\alpha|_F$ is closed, its cohomology class belongs to
$H^1(F,\CC)_{\zeta^q}$.

We shall use the following classical monodromy vanishing. A theorem of
Zariski on cyclic multiple planes implies that, for an irreducible plane
curve, a non-trivial root of unity of prime-power order cannot occur as
a root of the first Alexander polynomial. Equivalently, if
$\lambda\ne1$ has prime-power order, then the corresponding monodromy
eigenspace on the global Milnor fiber vanishes:
\[
 H^1(F,\CC)_\lambda=0.
\]
This is precisely the consequence of Zariski's theorem used below; see
the modern formulation recalled by Artal Bartolo--Dimca
\cite{ArtalDimca}. In particular, when $m=p^s$ and $1\le q<m$,
\[
 \ord(\zeta^q)=\frac{m}{\gcd(m,q)}
\]
is a non-trivial prime power, and hence
\[
 H^1(F,\CC)_{\zeta^q}=0.
\]

For the trivial eigenvalue one has
\[
 H^1(F,\CC)_1\simeq H^1(U,\CC),
 \qquad U=\PP^2\setminus C.
\]
For a reduced irreducible plane curve of degree $m$,
\[
 H_1(U,\ZZ)\simeq\ZZ/m\ZZ,
\]
and therefore
\[
 H^1(U,\CC)=0.
\]
More generally, if a reduced plane curve has irreducible components of
degrees $d_1,\ldots,d_r$, its first homology is generated by meridians
$\mu_1,\ldots,\mu_r$ subject to the single relation
$\sum_i d_i\mu_i=0$; see \cite{ArtalDimca}.

Finally, we recall why algebraic de Rham theory is needed after the
cohomological vanishing.  The ordinary Poincar\'e lemma gives only
local primitives.  Here, once a closed algebraic $1$-form on the
smooth affine variety $F$ has zero class in $H^1(F,\CC)$,
Grothendieck's algebraic de Rham theorem identifies this vanishing with
exactness in the algebraic de Rham complex.  Consequently there exists
a \emph{global regular algebraic} function $g\in\OO(F)$ such that
\[
 \alpha|_F=dg.
\]
This algebraicity, rather than merely local holomorphic exactness, is
what will make the descent and denominator-elimination argument
possible; see \cite{HartshorneDR}.

\begin{lemma}[Twisted vanishing]\label{lem:vanishing}
Let $P\in\CC[x,y,z]$ be homogeneous, reduced and irreducible of degree
$m=p^s$. Let $\alpha$ be a homogeneous polynomial $1$-form of weight
$q$, $1\leq q\leq m$, satisfying
\[
 i_R\alpha=0,\qquad
 P\,d\alpha+\frac qm\alpha\wedge dP=0.
\]
Then $\alpha=0$.
\end{lemma}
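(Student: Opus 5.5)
The plan is to recast the twisted equation as a closedness statement on the global Milnor fiber $F=P^{-1}(1)\subset\CC^3$. We then use the vanishing results recalled above to make $\alpha$ exact there, lift the primitive to a homogeneous rational function on $\CC^3$, and finally show by a pole-order argument that this function forces $\alpha=0$.

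\emph{Step 1: $\alpha|_F$ is closed and its class dies.} Since $P\equiv1$ on $F$, we have $dP|_F=0$. Restricting the hypothesis $P\,d\alpha=-\frac qm\alpha\wedge dP$ to $F$ therefore gives $d(\alpha|_F)=0$. Because $\alpha$ has weight $q$, we have $h^*(\alpha|_F)=\zeta^q\alpha|_F$, so $[\alpha|_F]\in H^1(F,\CC)_{\zeta^q}$.
\begin{itemize}
\item If $1\le q<m=p^s$, then $\zeta^q$ is a non-trivial root of unity of prime-power order, and this eigenspace vanishes by Zariski's theorem as recalled above.
\item If $q=m$, the class lies in $H^1(F,\CC)_1\simeq H^1(U,\CC)=0$.
\end{itemize}
In both cases Grothendieck's algebraic de Rham theorem provides a regular function $g\in\OO(F)=\CC[x,y,z]/(P-1)$ with $\alpha|_F=dg$.

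\emph{Step 2: descent to a homogeneous rational function.} The cyclic group $\langle h\rangle$ acts on $\OO(F)$. We replace $g$ by its $\zeta^q$-isotypic projection $\frac1m\sum_k\zeta^{-qk}(h^k)^*g$; this is still a primitive because $h^*\alpha|_F=\zeta^q\alpha|_F$. Isotypic elements are restrictions of sums $\sum_k G_k$ of homogeneous polynomials with $\deg G_k\equiv q\pmod m$. We set
\[
\widetilde G=\sum_k G_k\,P^{(q-\deg G_k)/m},
\]
which is a homogeneous rational function of degree $q$, with poles only along $P=0$, and satisfies $\widetilde G|_F=g$. Now compare $\alpha$ with
\[
\eta=d\widetilde G-\frac qm\,\widetilde G\,\frac{dP}{P}.
\]
Both $\alpha$ and $\eta$ are homogeneous of weight $q$. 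Both are horizontal: for $\eta$, Euler's identities give $i_R\eta=q\widetilde G-q\widetilde G=0$. Finally, they agree on $TF$. Since $\CC^*\cdot F=\CC^3\setminus\{P=0\}$ and $T\CC^3=TF\oplus\CC R$ along $F$, homogeneity forces $\alpha=\eta$ identically.

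\emph{Step 3: eliminating denominators.} Write $\widetilde G=N/P^k$ in lowest terms, so either $k=0$ or $P\nmid N$. Then
\[
\alpha=\frac{dN}{P^k}-\Bigl(k+\frac qm\Bigr)\frac{N\,dP}{P^{k+1}}.
\]
Multiplying by $P^{k+1}$ shows that $P$ divides $\bigl(k+\frac qm\bigr)N\,dP$. Since $P$ is reduced, $P\nmid dP$ (its partials have no common factor with $P$ by Euler and reducedness), and $k+\frac qm\neq0$. As $P$ is irreducible, it follows that $P\mid N$. This forces $k=0$, so $\widetilde G$ is a polynomial of degree $q\le m$ divisible by $P$. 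If $q<m$ this means $\widetilde G=0$. If $q=m$ then $\widetilde G=cP$, and $\alpha=c\,dP-c\,dP=0$. In either case $\alpha=0$.

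I expect the main obstacle to be Step 2. One must check carefully that the isotypic projection preserves the primitive, that its components really lift to homogeneous pieces of degree $\equiv q\pmod m$, and that the identity $\alpha=\eta$ extends from $F$ to all of $\CC^3\setminus\{P=0\}$ via the radial direction. The other two steps are direct applications of the recalled vanishing and of irreducibility of $P$.
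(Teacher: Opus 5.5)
Your proof is correct and follows the paper's argument step for step: closedness of $\alpha|_F$ on the Milnor fiber, vanishing of the $\zeta^q$-eigenspace (Zariski for $q<m$, $H^1(U,\CC)=0$ for $q=m$), an algebraic primitive made $\zeta^q$-isotypic, descent to a weight-$q$ function on $\CC^3\setminus\{P=0\}$ with $\alpha=df-\frac qm f\,\frac{dP}{P}$, and the minimal-denominator argument. The only cosmetic difference is in the descent step: you write the descended function explicitly as $\sum_k G_kP^{(q-\deg G_k)/m}$ and identify $\alpha$ with $\eta$ via $T\CC^3=TF\oplus\CC R$ and homogeneity, whereas the paper descends $r^q g(u)$ through the \'etale cover $\CC^*\times F\to V$, and the two constructions give the same function.
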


\begin{proof}
Restrict the twisted equation to the Milnor fiber
\[
 F=\{P=1\}.
\]
Since $dP$ vanishes on tangent vectors to $F$, we obtain
\[
 d_F(\alpha|_F)=0.
\]
By homogeneity and the description of the monodromy above,
\[
 h^*(\alpha|_F)=\zeta^q\alpha|_F,
\]
and therefore
\[
 [\alpha|_F]\in H^1(F,\CC)_{\zeta^q}.
\]
If $q<m$, the eigenvalue $\zeta^q$ has non-trivial prime-power order,
so the classical vanishing recalled above gives
\[
 H^1(F,\CC)_{\zeta^q}=0.
\]
If $q=m$, then $\zeta^q=1$ and
\[
 H^1(F,\CC)_1\simeq H^1(\PP^2\setminus\{P=0\},\CC)=0.
\]
Thus in every case
\[
 [\alpha|_F]=0.
\]

Since $F$ is smooth affine, algebraic de Rham theory gives a global
regular function $g\in\OO(F)$ such that
\[
 \alpha|_F=dg.
\]
We now choose the primitive so that it transforms under the monodromy
in exactly the same way as $\alpha|_F$.  Define the
$\zeta^q$-isotypical projection of $g$ by
\[
 g_q(u)
 =
 \frac1m\sum_{\ell=0}^{m-1}
 \zeta^{-q\ell}g(\zeta^\ell u).
\]
Because
\[
 d(g\circ h^\ell)
 =h^{\ell *}(dg)
 =\zeta^{q\ell}dg,
\]
we have
\[
 dg_q
 =
 \frac1m\sum_{\ell=0}^{m-1}
 \zeta^{-q\ell}\zeta^{q\ell}dg
 =dg.
\]
Moreover, a change of index gives
\[
 g_q(\zeta u)=\zeta^qg_q(u).
\]
Replacing $g$ by $g_q$, we may therefore assume, without changing its
differential, that
\begin{equation}\label{eq:g-character}
 g(\zeta u)=\zeta^qg(u).
\end{equation}
For $q=m$ this is simply the invariant averaging of $g$ over the
cyclic group.

On
\[
 V=D(P)=\CC^3\setminus\{P=0\}
\]
consider the finite \'etale covering
\[
 \pi:\CC^*\times F\longrightarrow V,\qquad (r,u)\longmapsto ru.
\]
Since $P(u)=1$, homogeneity gives $P(ru)=r^m$.  The deck group is
\[
 \mu_m=\{\xi\in\CC^*:\xi^m=1\},
\]
the cyclic group of $m$-th roots of unity, acting by
\[
 \xi\cdot(r,u)=(\xi r,\xi^{-1}u).
\]
Equation \eqref{eq:g-character} holds with $\zeta$ replaced by every
$\xi\in\mu_m$, since each such $\xi$ is a power of $\zeta$.  Hence
\[
 g(\xi^{-1}u)=\xi^{-q}g(u),
\]
and therefore, under the deck transformation above,
\[
 (\xi r)^qg(\xi^{-1}u)
 =\xi^q r^q\,\xi^{-q}g(u)
 =r^qg(u).
\]
Thus
\[
 \widetilde f(r,u)=r^qg(u)
\]
is invariant under the deck group and descends to a single-valued
regular function
\[
 f\in\OO(V).
\]

For completeness, we record the differential identity satisfied by
$f$.  Homogeneity of $\alpha$ and the horizontality relation
$i_R\alpha=0$ imply
\[
 \pi^*\alpha=r^q\,\alpha|_F.
\]
Since $dg=\alpha|_F$,
\[
 d\widetilde f
 =q r^{q-1}g(u)\,dr+r^q\alpha|_F
 =\pi^*\alpha+\frac qm\widetilde f\,
 \pi^*\!\left(\frac{dP}{P}\right),
\]
because
\[
 \pi^*\!\left(\frac{dP}{P}\right)=m\,\frac{dr}{r}.
\]
Descending this equality to $V$ yields
\begin{equation}\label{eq:f}
 \alpha=df-\frac qm f\frac{dP}{P}.
\end{equation}

Since
\[
 \OO(V)=\CC[x,y,z,P^{-1}]
\]
and $f$ is homogeneous of weight $q$, we may write
\[
 f=\frac{G}{P^k},
\]
where $k\ge0$ is chosen minimal and $G$ is homogeneous of degree
$q+km$.  Minimality means that, if $k>0$, then $P\nmid G$.
Substituting this expression into \eqref{eq:f} and multiplying by
$P^{k+1}$ gives
\begin{equation}\label{eq:denominator}
 P^{k+1}\alpha
 =
 P\,dG-\left(k+\frac qm\right)G\,dP.
\end{equation}
Reducing modulo $P$, the left-hand side and the term $P\,dG$ vanish,
so
\[
 \left(k+\frac qm\right)G\,dP\equiv0\pmod P.
\]
Since $k+q/m\ne0$, this gives
\[
 G\,dP\equiv0\pmod P.
\]
The polynomial $P$ is irreducible and reduced, so $dP$ is not
identically zero along $P=0$; equivalently, at least one partial
derivative of $P$ is not divisible by $P$.  Hence the last congruence
implies
\[
 P\mid G.
\]
If $k>0$, this contradicts the minimality of $k$, since writing
$G=PG_1$ would give
\[
 f=\frac{G_1}{P^{k-1}}.
\]
Consequently
\[
 k=0.
\]

With $k=0$, equation \eqref{eq:denominator} still implies $P\mid G$.
If $q<m$, then $\deg G=q<m=\deg P$, hence $G=0$, and
\eqref{eq:f} gives $\alpha=0$.  If $q=m$, then
\[
 \deg G=\deg P=m.
\]
Since $P\mid G$, the quotient $G/P$ has degree zero, so
\[
 G=cP
\]
for some constant $c\in\CC$.  Substituting into \eqref{eq:f}, and
using $q/m=1$, gives
\[
 \alpha
 =d(cP)-cP\,\frac{dP}{P}
 =c\,dP-c\,dP
 =0.
\]
This completes the proof.
\end{proof}

\begin{remark}
Let us give a few hints about the use of the prime-power hypothesis $m=p^s$ in Lemma~\ref{lem:vanishing}. This hypothesis enters  precisely through the monodromy. The
order of $\zeta^q$ is
\[
 \frac{m}{\gcd(m,q)},
\]
and is therefore a prime power when $m=p^s$. If $m$ is not a prime
power, this order may have at least two distinct prime divisors, and the
corresponding eigenspace $H^1(F,\CC)_{\zeta^q}$ need not vanish. In
that case a non-zero twisted cohomology class may survive and provide a
genuine obstruction to the conclusion $\alpha=0$. Later on, we shall show through an example that  this obstruction phenomenon actually occurs (see Section~\ref{sec:sharpness}).
\end{remark}

\section{Proof of Theorem~\ref{thm:main}}

The last step before proving Theorem~\ref{thm:main} is the following  passage from the dimension $3$ case to arbitrary dimension.
\begin{lemma}[Linear sections]\label{lem:linear}
Let $\alpha$ be a homogeneous horizontal polynomial $1$-form on
$\CC^n$. If its restriction to a general three-dimensional linear
subspace is zero, then $\alpha=0$.
\end{lemma}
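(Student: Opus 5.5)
The plan is to reduce the statement to a pointwise linear-algebra fact about the coefficients of $\alpha$. I read ``restriction'' as the pull-back $\iota_V^*\alpha$ to a linear subspace $V\subset\CC^n$ of dimension $3$; here $\CC^n$ is the space of homogeneous coordinates, so $V$ corresponds to a projective plane, as in the plane sections used in Lemma~\ref{lem:vanishing}.

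Write $\alpha=\sum_i a_i(x)\,dx_i$ with $a_i$ homogeneous of a common degree. Let $W\subset\Gr(3,n)$ be the non-empty Zariski open set of $3$-planes on which $\iota_V^*\alpha=0$. For $V\in W$ and every point $x\in V$, the covector $\alpha_x$ annihilates $V$, regarded as a subspace of $T_x\CC^n=\CC^n$.

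Now fix a point $x\neq0$ such that the planes of $W$ through $x$ form a non-empty open subset of the Grassmannian of $3$-planes containing $x$. Since $\dim\{V\ni x\}=2(n-3)$ and the incidence variety $\{(x,V):x\in V\}$ is irreducible and dominates $\CC^n$, such $x$ form a dense open subset of $\CC^n$. For this $x$, $\alpha_x$ vanishes on every $V$ in a non-empty open family of $3$-planes containing $x$. These planes span $\CC^n$: a non-empty open set of $3$-planes through $x$ cannot all lie in a proper hyperplane. Hence $\alpha_x=0$, which is purely linear algebra. So $\alpha$ vanishes on a dense open set, and since it is polynomial, $\alpha=0$.

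Horizontality $i_R\alpha=0$ is not needed for this argument. It would become relevant only under a weaker reading of ``restriction'', for instance restriction to a plane section in affine coordinates, where one loses the radial direction. In that case I would use $\alpha_x(x)=0$ to account for the direction $x\in V$ itself, and run the same argument with the $2$-dimensional complements of $\langle x\rangle$.

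The only step needing care is the genericity claim: that a general point $x$ lies in a general plane of $W$ through $x$, and that those planes span. I would handle it with the incidence-correspondence dimension count above. Alternatively, for fixed $V$ one can use the $\mathrm{GL}_n$-orbit of $V$ and the open condition defining $W$, shrinking to a neighborhood in which every $V'$ works.
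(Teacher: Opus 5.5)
Your argument is correct, but it takes a different route from the paper's.

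\textbf{The paper's route.} It argues by contradiction using a single witness plane. If $\alpha\neq0$, pick $x\neq0$ and $v$ with $\alpha_x(v)\neq0$, and take a $3$-plane $W\supset\langle x,v\rangle$; then $\alpha|_W\neq0$. Horizontality is used only to make $v$ independent of $x$. The locus $\{W':\alpha|_{W'}=0\}$ is Zariski closed in $\Gr(3,n)$, since in charts it is the vanishing of polynomial coefficients. So its complement is a non-empty open set, and it must meet the open set supplied by the hypothesis because $\Gr(3,n)$ is irreducible.

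\textbf{Your route.} You work pointwise. For $x$ in a dense set, the good planes through $x$ form a non-empty open subset of $\{V\ni x\}\cong\Gr(2,n-1)$. They cannot all lie in one hyperplane, so $\ker\alpha_x=\CC^n$. This is valid. For $n\ge4$, the planes through $x$ inside a fixed hyperplane form a closed subset of dimension $2(n-4)<2(n-3)$, and $n=3$ is trivial.

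\textbf{What each approach buys.} Your route costs you the incidence-correspondence step, and the density of good points $x$ needs a word via Chevalley or openness of the projection. The paper avoids this by combining closedness of the vanishing condition with one explicit plane. The same closedness observation would also shorten your proof. The vanishing locus is closed and contains a non-empty open subset of the irreducible $\Gr(3,n)$, so it is all of $\Gr(3,n)$. Then every $3$-plane through every $x$ lies in $\ker\alpha_x$, and no genericity bookkeeping is needed.

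Your remark that horizontality is inessential is right. It applies to the paper's proof as well: a $3$-plane containing both $x$ and $v$ exists whether or not they are independent, and the restriction to it is still non-zero at $x$.
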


\begin{proof}
If $\alpha\neq0$, choose $x\neq0$ and $v$ with
$\alpha_x(v)\neq0$. Since $\alpha_x(x)=0$, choose $v$ independent of
$x$ and complete $x,v$ to a three-dimensional vector subspace $W$.
Then $\alpha|_W\neq0$. The condition $\alpha|_{W'}=0$ is algebraic
in $W'\in\Gr(3,n)$: in a local chart of the Grassmannian it is the
simultaneous vanishing of the polynomial coefficients of the
restricted form. Hence
\[
 \{W'\in\Gr(3,n):\alpha|_{W'}\neq0\}
\]
is a non-empty Zariski open subset containing $W$.
\end{proof}

\begin{proof}[Proof of Theorem~\ref{thm:main}]
If some $\alpha_j$ in \eqref{eq:difference} is non-zero, choose the
first one and put $q=m-j$. Since the coefficients of $\alpha_j$ are
homogeneous of degree $m-j-1$, the $1$-form $\alpha_j$ has weight
$q=m-j$. Lemma~\ref{lem:first} gives
\[
 i_{R_H}\alpha_j=0,\qquad
 P\,d\alpha_j+\frac qm\alpha_j\wedge dP=0.
\]
Choose a general plane $L\simeq\PP^2$ in $H$. When $\dim H>2$,
obtain $L$ by successive general hyperplane sections. Bertini's
irreducibility theorem then gives that $S\cap L$ is a reduced
irreducible plane curve of degree $m=p^s$.
The restriction $\alpha_j|_L$ satisfies Lemma~\ref{lem:vanishing};
hence it is zero for every $L$ in the non-empty Bertini open subset of
the Grassmannian. If $\alpha_j\neq0$, Lemma~\ref{lem:linear} and its
proof give a non-empty Zariski open subset on which
$\alpha_j|_L\neq0$. Since the Grassmannian is irreducible, these two
non-empty open subsets intersect, a contradiction. Thus
$\alpha_j=0$.
Therefore all $\alpha_j$ vanish and
\[
 \Omega=Q\,dt-\frac tm dQ
 =-\frac{t^{m+1}}m\,d\left(\frac{Q}{t^m}\right).
\]
\end{proof}

\section{Smooth invariant hypersurfaces: proof of Theorem~\ref{thm:smooth-hypersurface}}\label{sec:smooth-hypersurface}

We now prove Theorem~\ref{thm:smooth-hypersurface}. The argument
parallels the proof of Theorem~\ref{thm:main}. There, the remainder is
analyzed successively according to its order of divisibility by the
linear equation $t$ of the invariant hyperplane. Here we use the same
procedure with the defining polynomial $F$ of the invariant hypersurface
$D=(F=0)$; in this sense the argument is {\it $F$-adic}.  Zariski's
prime-power vanishing (see \S~\ref{subsection:zariskivanishing} and Lemma~\ref{lem:vanishing})  is replaced by the topology of the cyclic cover
branched along the smooth divisor $S$.

Our strategy is to prove a couple of similar vanishing statements  needed in the induction process similarly to the proof of Theorem~\ref{thm:main}.
The first is an  untwisted version, in which no smoothness
assumption on the divisor is needed.

\begin{lemma}[Untwisted vanishing for an irreducible divisor on a smooth hypersurface]
\label{lem:untwisted-irreducible}
Let $D=(F=0)\subset\PP^n$, $n\ge3$, be a smooth hypersurface of degree
$e$, and let $P$ be homogeneous of degree $r>0$ such that
\[
 S=D\cap(P=0)
\]
is reduced and irreducible. Let $\alpha$ be a homogeneous polynomial
$1$-form of weight $r$ satisfying $i_R\alpha=0$ and, on the affine cone
$\widehat D=(F=0)$,
\[
 P\,d\alpha+\alpha\wedge dP=0.
\]
Then the restriction of $\alpha$ to $D$ vanishes.
\end{lemma}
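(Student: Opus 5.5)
We follow the pattern of Lemma~\ref{lem:vanishing} with the eigenvalue $\zeta^q$ replaced by the trivial eigenvalue, which is why no arithmetic or smoothness hypothesis on $S$ is needed. Write $\widehat D=(F=0)\subset\CC^{n+1}$ for the affine cone and set
\[
 V=\widehat D\setminus(P=0),\qquad \Phi=\widehat D\cap\{P=1\}.
\]
Since $D$ is smooth, $\widehat D\setminus\{0\}$ is smooth. Euler's identity $i_RdP=rP$ shows that $dP|_{\widehat D}$ is non-zero along $\Phi$, so $\Phi$ is a smooth affine variety. Restricting the twisted equation $P\,d\alpha+\alpha\wedge dP=0$ to $\Phi$ kills the term $\alpha\wedge dP$, hence $d(\alpha|_\Phi)=0$. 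Because $\alpha$ has weight $r=\deg P$, the monodromy $h(u)=\zeta u$ with $\zeta=e^{2\pi i/r}$ acts by $h^*(\alpha|_\Phi)=\alpha|_\Phi$. Thus $[\alpha|_\Phi]$ lies in the invariant part $H^1(\Phi,\CC)_1$.

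The key step is the vanishing $H^1(\Phi,\CC)_1=0$. The invariant part is the cohomology of the quotient $\Phi/\mu_r$, which is the complement $U=D\setminus S$. So we must show $H^1(D\setminus S,\CC)=0$.

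For $n\ge3$ the smooth hypersurface $D$ has $\dim D\ge2$, so by Lefschetz it is simply connected and $H_1(D,\ZZ)=0$. Since $S$ is an irreducible divisor, the Gysin/Thom sequence for the pair $(D,D\setminus S)$ shows that $H_1(D\setminus S,\ZZ)$ is generated by a meridian $\mu$ of $S$. The class $[S]\in H^2(D,\ZZ)$ is the restriction of $r\,c_1(\OO(1))$, and that restriction is non-zero of infinite order; this forces $\mu$ to be torsion. Concretely, $P^{1/r}$-monodromy shows that $\mu$ has order dividing $r$. Hence
\[
 H^1(D\setminus S,\CC)=\operatorname{Hom}(H_1(D\setminus S),\CC)=0.
\]
An alternative route: a closed algebraic $1$-form on $U$ has periods determined by the residue along $S$. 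Pairing with $\mu$, the only possible classes are multiples of $d\log(P/\ell^r)$, but $P/\ell^r$ is not a regular function on $U$ unless $S$ is cut by the single hyperplane $\ell$; in that case the class is the pullback of a generator of $H^1$ of the affine variety, which is again zero since $H^1(D\setminus H)$ vanishes for $D$ simply connected with $H$ ample and irreducible. The meridian-torsion argument is cleaner and is the one we adopt. This step is the main obstacle, mainly for making precise the identification $H^1(\Phi)_1\simeq H^1(U)$ over the singular locus of $S$. That identification, however, only uses that $\Phi\to U$ is a finite étale quotient by $\mu_r$, which holds because $\Phi$ avoids $P=0$.

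With the class zero, Grothendieck's algebraic de Rham theorem gives $g\in\OO(\Phi)$ with $dg=\alpha|_\Phi$. Averaging over $\mu_r$ makes $g$ invariant. Descending $\widetilde f(\rho,u)=\rho^r g(u)$ through the étale cover $\CC^*\times\Phi\to V$ exactly as before yields $f\in\OO(V)$, homogeneous of weight $r$, with
\[
 \alpha|_{\widehat D}=df-f\,\frac{dP}{P}.
\]
Write $f=G/P^k$ with $G\in\CC[\widehat D]$ homogeneous and $k$ minimal. The denominator argument of Lemma~\ref{lem:vanishing} then runs in the coordinate ring $\CC[\widehat D]/(P)$, which is a domain because $S$ is reduced and irreducible and $\widehat D$ is normal. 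Reducing modulo $P$ gives $(k+1)G\,dP\equiv0$ in $\Omega^1_{\widehat D}$ modulo $P$. Since $S$ is reduced, $dP|_{\widehat D}$ is generically non-zero along $\widehat S$, and so $P\mid G$. This forces $k=0$ and then $G=cP$ in degree $r$. Substituting gives $\alpha|_{\widehat D}=c\,dP-c\,dP=0$, i.e. the restriction of $\alpha$ to $D$ vanishes.
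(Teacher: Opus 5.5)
Your proposal is correct and follows essentially the same route as the paper. Both arguments use the étale $\mu_r$-cover $\{F=0,P=1\}\to D\setminus S$ and prove $H^1(D\setminus S,\CC)=0$ from Lefschetz simple connectivity, a meridian generating $H_1$, and that meridian being torsion (the paper gets torsion from a general complete-intersection curve $C$ with $C\cdot S=re$, a concrete instance of your non-torsion-of-$[S]$ argument), followed by an invariant algebraic primitive, descent to $\alpha=df-f\,dP/P$, and the minimal-denominator argument.

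One minor slip, harmless for the proof: the $P^{1/r}$ ($\mu_r$-cover) monodromy only shows that the order of $\mu$ is a multiple of $r$, not a divisor of it; and for singular $S$, cyclicity of $H_1(D\setminus S,\ZZ)$ comes from Lefschetz duality $H_2(D,D\setminus S)\cong H^{2\dim D-2}(S)\cong\ZZ$ rather than a Thom isomorphism.
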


\begin{proof}
Put $U=D\setminus S$ and
\[
 M=\{F=0,\ P=1\}\subset\CC^{n+1}.
\]
The variety $M$ is smooth, by the same Euler-field argument used below,
and $M\to U$ is the unramified cyclic cover of degree $r$. Restriction
to $M$ gives a closed form $\alpha_M$. Since $\alpha$ has weight $r$,
its cohomology class is invariant under the deck group. Hence
\[
 [\alpha_M]\in H^1(M,\CC)_1\simeq H^1(U,\CC).
\]

We next show directly:
\begin{claim}
We have  $H^1(U,\CC)=0$.
\end{claim}
\begin{proof}[proof of the claim]
The claim is proved  without any smoothness
assumption on $S$. Since $D$ is a smooth hypersurface of dimension at
least two, the Lefschetz hyperplane theorem gives $\pi_1(D)=0$.
Let $\mu$ be a small positively oriented meridian around a smooth point
of $S$. Any loop in $U$ bounds a disk in $D$; after a small perturbation,
the disk can be chosen to avoid $\Sing(S)$ and to meet $S_{\rm reg}$
transversely in finitely many points. Removing small disks around these
intersection points expresses the original loop as a product of
conjugates of meridians or their inverses. Because $S$ is irreducible, all
such meridians are conjugate up to orientation. Hence $H_1(U,\ZZ)$ is
cyclic, generated by $[\mu]$.

Choose now a general complete-intersection curve $C\subset D$, obtained
by intersecting $D$ with $n-2$ general hyperplanes. We may assume that
$C$ avoids $\Sing(S)$ and meets $S$ transversely. Since
$S\in|\mathcal O_D(r)|$ and $\deg D=e$, one has
\[
 C\cdot S=re.
\]
Removing small disks from $C$ around the $re$ intersection points shows
that the oriented boundary of the resulting surface represents
$re[\mu]$. Therefore
\[
 re[\mu]=0\qquad\hbox{in }H_1(U,\ZZ).
\]
Thus $H_1(U,\ZZ)$ is finite cyclic, and consequently
\[
 H^1(U,\CC)=0.
\]
\end{proof}

Next we have:
\begin{claim}
We have $\alpha_M=dg$ for some regular function $g\in\mathcal O(M)$ which is invariant under
$\Gamma$.
\end{claim}
\begin{proof}
It follows from the claim above that $[\alpha_M]=0$.
Because $M$ is smooth affine, algebraic de Rham theory yields
$\alpha_M=dg$ for some regular $g\in\mathcal O(M)$. Since $dg$ is invariant
under the deck group $\Gamma$, for every deck transformation
$\tau\in\Gamma$ one has
\[
 d(\tau^*g-g)=0,
\]
so $\tau^*g-g$ is constant. Replacing $g$ by the average
\[
 \widetilde g=\frac1{|\Gamma|}\sum_{\tau\in\Gamma}\tau^*g
\]
gives another primitive, $d\widetilde g=dg$, which is invariant under
$\Gamma$.
\end{proof}

Consider the $r$-fold covering
\[
\CC^*\times M \longrightarrow V,
\qquad
(\rho,u)\longmapsto \rho u.
\]
Since $\widetilde g$ is invariant under the deck group and $\alpha$ has
weight $r$, the function
\[
\rho^r\widetilde g(u)
\]
is invariant under the deck action and therefore descends to a homogeneous
regular function $f$ of degree $r$ on $V$. A direct differentiation gives
\[
\alpha=df-f\frac{dP}{P}.
\]

Let
\[
 A_D=\CC[x_0,\ldots,x_n]/(F)
\]
be the homogeneous coordinate ring of the affine cone. Then
$\mathcal O(V)=A_D[P^{-1}]$, so we may write, with minimal $k\ge0$,
\[
 f=\frac{G}{P^k},\qquad G\in A_D
\]
with $G$ homogeneous of degree $(k+1)r$. Minimality means that $P\nmid G$
if $k>0$. Multiplying the identity above by $P^{k+1}$ gives
\[
 P^{k+1}\alpha=P\,dG-(k+1)G\,dP.
\]
Since $S$ is reduced and irreducible, the image of $P$ in $A_D$ is prime,
and $dP$ is non-zero at the generic point of $S$. Reducing modulo $P$
therefore forces $P\mid G$. This contradicts minimality when $k>0$, so
$k=0$. The same reduction then gives $P\mid G$; since $G$ and $P$ both
have degree $r$, we have $G=cP$ for some $c\in\CC$. Consequently
$\alpha=d(cP)-cP\,dP/P=0$ on $D$.
\end{proof}

The next vanishing statement need in the induction is the following:

\begin{lemma}[Twisted vanishing on a smooth hypersurface]
\label{lem:smooth-twisted}
Let $D=(F=0)\subset\PP^n$, $n\ge3$, be a smooth hypersurface of degree
$e$, and let $P$ be a homogeneous polynomial of degree $r>0$ such that
\[
 S=D\cap(P=0)
\]
is smooth and irreducible.  Let $\alpha$ be a homogeneous polynomial
$1$-form of weight $q$, $1\le q\le r$, satisfying
\[
 i_R\alpha=0
\]
and, after restriction to the affine cone $\widehat D=(F=0)$,
\begin{equation}\label{eq:smooth-twisted}
 P\,d\alpha+\frac qr\alpha\wedge dP=0.
\end{equation}
Then the restriction of $\alpha$ to $D$ vanishes as a differential
$1$-form on $D$.
\end{lemma}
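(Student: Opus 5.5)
The plan is to copy the proof of Lemma~\ref{lem:vanishing}, with the global Milnor fiber of a plane curve replaced by
\[
 M=\{F=0,\ P=1\}\subset\CC^{n+1}.
\]
First, $M$ is smooth. By the Euler identity, $i_R dP=rP$ and $i_R dF=eF$. Since $D$ and $S$ are smooth, $dF$ and $dP$ are linearly independent along $M$, so $M$ is a smooth affine variety. The map $M\to U=D\setminus S$, $u\mapsto[u]$, is an unramified cyclic cover of degree $r$, with monodromy $h(u)=\zeta u$, $\zeta=e^{2\pi i/r}$.

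Restricting \eqref{eq:smooth-twisted} to $M$ kills the $dP$ term, so $\alpha_M=\alpha|_M$ is closed. Weight $q$ gives $h^*\alpha_M=\zeta^q\alpha_M$, hence
\[
 [\alpha_M]\in H^1(M,\CC)_{\zeta^q}.
\]
For $q=r$ this eigenspace is $H^1(U,\CC)$. That group vanishes by the Claim in the proof of Lemma~\ref{lem:untwisted-irreducible}, and the rest of that proof goes through verbatim. So the new content is the case $1\le q<r$.

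The key step, and the main obstacle, is to show that
\[
 H^1(M,\CC)_\lambda=0\quad\text{for every }\lambda\ne1 \text{ with } \lambda^r=1.
\]
Smoothness of $S$ replaces Zariski's prime-power theorem here. My approach is to compactify $M$ as the $r$-fold cyclic cover $Y\to D$ branched along $S$:
\[
 Y=\{F(x)=0,\ w^r=P(x)\}\subset\PP(1,\dots,1,1).
\]
Because $S$ is smooth and $S\in|\OO_D(r)|$, $Y$ is a smooth projective variety and a smooth hypersurface section of a weighted cone. I would argue in two steps.
\begin{enumerate}[label=(\alph*)]
\item Since $\dim Y=n-1\ge2$, a Lefschetz-type theorem gives $b_1(Y)=0$. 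Concretely, $Y$ is a smooth complete intersection of degrees $(e,r)$ in $\PP^{n+1}$ after substituting $w$ as a new linear coordinate.
\item Compare $H^1(U')$ with $H^1(Y)$, where $U'=Y\setminus\widetilde S$ is the unramified part. The Gysin sequence
\[
 0\to H^1(Y)\to H^1(U')\to H^0(\widetilde S)(-1)\to H^2(Y)
\]
is $\mu_r$-equivariant. The $\widetilde S$ term is invariant, since $\mu_r$ fixes the ramification locus pointwise, so it contributes only to the trivial eigenspace. Hence $H^1(U')_\lambda\subset H^1(Y)_\lambda=0$ for $\lambda\ne1$.
\end{enumerate}
Finally, $M\to U'$ is a $\CC^*$-type quotient: $U'$ equals $M$ modulo nothing more than identifying the affine cone with the projective cover, since $M=\{w=1\}$ in the cone over $Y$. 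So $M\cong U'$ as varieties, with $h$ corresponding to the deck action, and therefore $H^1(M,\CC)_{\zeta^q}=0$. If the identification $M\cong U'$ is subtler than this, I would instead run the same Gysin argument directly on a smooth compactification of $M$.

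Once $[\alpha_M]=0$, the rest is the routine part of Lemma~\ref{lem:vanishing}, transplanted to $A_D=\CC[x]/(F)$.
\begin{enumerate}[label=(\roman*)]
\item Algebraic de Rham gives $\alpha_M=dg$ with $g\in\OO(M)$.
\item Project $g$ onto the $\zeta^q$-isotypic part, so that $g(\zeta u)=\zeta^q g(u)$.
\item Descend $\rho^q g(u)$ through $\CC^*\times M\to V=\widehat D\setminus(P=0)$ to a regular $f$ of weight $q$ with $\alpha=df-\frac qr f\,dP/P$ on $\widehat D$.
\item Write $f=G/P^k$ with $G\in A_D$ and $k$ minimal.
\item Reduce $P^{k+1}\alpha=P\,dG-(k+q/r)G\,dP$ modulo the prime ideal $(P)\subset A_D$. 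Since $dP\not\equiv0$ on $S$, this forces $P\mid G$, hence $k=0$.
\item Conclude by degree. If $q<r$, then $G=0$. If $q=r$, then $G=cP$, and $\alpha|_D=0$ either way.
\end{enumerate}
Primality of $(P)$ in $A_D$ uses that $S$ is reduced and irreducible and that $A_D$ is a normal domain, since $D$ is smooth of dimension at least $2$.
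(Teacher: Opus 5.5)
Your proof is correct, and its architecture matches the paper's: the same $M=\{F=0,\ P=1\}$, vanishing of the relevant part of $H^1(M,\CC)$ via a smooth projective $r$-fold cyclic cover branched along $S$ together with a Gysin sequence, then algebraic de Rham, isotypic projection, descent to $V$, and the minimal-denominator argument in $A_D$.

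The differences are in the cohomological step.

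\textbf{The paper's route.} The paper builds the cyclic cover $X\to D$ abstractly. It gets $H^1(X,\CC)=0$ from $\pi_*\OO_X=\bigoplus_i\OO_D(-i)$, the vanishing $H^1(D,\OO_D(-i))=0$, and Hodge symmetry. It then kills all of $H^1(X\setminus R_X,\CC)$ by showing that the Gysin map $H^0(R_X)\to H^2(X)$ is injective, since $[R_X]\neq0$ by ampleness. This treats $q<r$ and $q=r$ uniformly.

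\textbf{Your route.} You write the cover as the complete intersection $Y=\{F=0,\ w^r=P\}\subset\PP^{n+1}$. You get $b_1(Y)=0$ from Lefschetz for smooth complete intersections of dimension $\ge2$. You then use only the $\mu_r$-equivariance of the residue map to kill the non-trivial eigenspaces, and fall back on the Claim in Lemma~\ref{lem:untwisted-irreducible} for the eigenvalue $1$.

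\textbf{What your model buys.}
\begin{itemize}
\item It gives a literal identification. $U'=Y\cap\{w\neq0\}$ is exactly $M$ in the chart $w=1$. The deck generator $w\mapsto\zeta w$ acts there as $u\mapsto\zeta^{-1}u$, which only inverts eigenvalues. So your hedge about ``$\CC^*$-type quotients'' is unnecessary, and this step is cleaner than the paper's identification of $M$ with a restriction of $X\to D$.
\item It makes the detour through the untwisted Claim avoidable. $\widetilde S=Y\cap\{w=0\}$ is a reduced hyperplane section isomorphic to the irreducible $S$. Hence the Gysin image of $1$ is the non-zero hyperplane class of $Y$, and $H^1(M,\CC)=0$ outright, as in the paper.
\end{itemize}

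One minor correction: smoothness of $M$ needs only smoothness of $D$ and the Euler relation $dP(R)=r\neq0=dF(R)$ on $M$. Smoothness of $S$ is instead what makes $Y$ smooth along $w=0$.
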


We shall need a few claims and one remark before the proving the lemma.  
\begin{proof}[Beginning of the proof of Lemma~\ref{lem:smooth-twisted}]
Consider the affine complete intersection
\[
 M=\{F=0,\ P=1\}\subset\CC^{n+1}.
\]

\begin{claim}
$M$  is smooth.
\end{claim}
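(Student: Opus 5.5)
To show that $M=\{F=0,\ P=1\}\subset\CC^{n+1}$ is smooth, we check the Jacobian criterion: at every point $u\in M$ the differentials $dF(u)$ and $dP(u)$ should be linearly independent. We use two facts. First, $D$ is smooth, so $dF$ vanishes on the cone $\widehat D$ only at the origin. Second, $S=D\cap(P=0)$ is smooth, which we take to mean that $P$ cuts $D$ transversally along $S$. The key tool is the same Euler-field argument as for the global Milnor fiber in Section~\ref{subsection:zariskivanishing}, applied to the pair $(F,P)$.

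Suppose, for a contradiction, that at some $u\in M$ there are constants $(a,b)\neq(0,0)$ with
\[
a\,dF(u)+b\,dP(u)=0 .
\]
Contract this relation with the radial field $R$ and use Euler's identities $i_RdF=eF$ and $i_RdP=rP$. This gives
\[
a\,eF(u)+b\,rP(u)=0 .
\]
Since $F(u)=0$ and $P(u)=1$, we get $b\,r=0$, so $b=0$. Then $a\neq0$ and $dF(u)=0$. But $u\neq0$ because $P(u)=1$, and $D$ is smooth, so $dF(u)\neq0$ on $\widehat D\setminus\{0\}$. This is a contradiction. Hence $dF(u)$ and $dP(u)$ are independent at every point of $M$, and $M$ is a smooth affine complete intersection of dimension $n-1$.

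This argument does not use the smoothness of $S$; that hypothesis only becomes relevant later, for the branched cyclic cover. The same argument also justifies the assertion ``$M$ is smooth'' made in the proof of Lemma~\ref{lem:untwisted-irreducible}.

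It remains to confirm that $M\to U=D\setminus S$, $u\mapsto[u]$, is an unramified cyclic cover of degree $r$ with deck group $\mu_r$ acting by $u\mapsto\xi u$. This follows because each line through a point of $\widehat D\setminus(P=0)$ meets $\{P=1\}$ in exactly $r$ points, permuted freely by $\mu_r$. There is no real obstacle in this step.
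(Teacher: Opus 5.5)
Your proof is correct and is essentially the paper's argument: you apply the Jacobian criterion and contract a putative linear relation between $dF(u)$ and $dP(u)$ with the Euler field, using $F(u)=0$, $P(u)=1$ and $dF(u)\neq0$ on $\widehat D\setminus\{0\}$. Writing the relation as $a\,dF(u)+b\,dP(u)=0$ only makes explicit the reduction to $dP=\lambda\,dF$ that the paper performs implicitly, and you are right that the smoothness of $S$ is not used here.
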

\begin{proof}[proof of the claim] Indeed, by the Jacobian criterion it is enough to show
that $dF$ and $dP$ are linearly independent at every point of $M$.
Suppose instead that $dP=\lambda dF$ at some $u\in M$. Since $D$ is
smooth and $u\neq0$, one has $dF(u)\neq0$. Let $R$ denote the Euler
field. By homogeneity,
\[
 dP(R)=rP=r, \qquad dF(R)=eF=0
\]
on $M$. Evaluating $dP=\lambda dF$ on $R$ would therefore give $r=0$,
a contradiction. Hence $M$ is smooth.
\end{proof}

Restricting \eqref{eq:smooth-twisted} to $M$ shows that
$\alpha_M:=\alpha|_M$ is closed.

A more delicate step is the following:
\begin{claim}
 We have 
\begin{equation}\label{eq:H1Mzero}
 H^1(M,\CC)=0.
\end{equation}
\end{claim}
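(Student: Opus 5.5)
The plan is to identify $M$ with the complement of an ample smooth divisor in a smooth projective complete intersection. Then Lefschetz's theorem and the Gysin sequence finish the job.

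\emph{Step 1: the compactification.} In $\PP^{n+1}$ with homogeneous coordinates $[x_0:\cdots:x_n:w]$, consider
\[
 Y=\{F(x)=0,\ P(x)=w^r\}\subset\PP^{n+1}.
\]
This is a complete intersection of dimension $n-1\ge2$, cut out by equations of degrees $e$ and $r$. It is the $r$-fold cyclic cover of $D$ branched along $S$.
\begin{itemize}
\item On the affine chart $w\neq0$, setting $w=1$ gives exactly $M=\{F=0,\ P=1\}\subset\CC^{n+1}$.
\item The hyperplane section $Y_\infty=Y\cap\{w=0\}=\{w=0,\ F=0,\ P=0\}$ is a copy of $S$.
\item Hence $M=Y\setminus Y_\infty$, and $Y_\infty$ is an ample divisor on $Y$.
\end{itemize}

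\emph{Step 2: smoothness of $Y$.} On $w\neq0$ this is the previous claim, that $M$ is smooth. At a point with $w=0$, the differentials of the two equations are $dF$ and $dP-rw^{r-1}dw=dP$ (for $r\ge2$; for $r=1$ the $dw$ term makes independence automatic). So we need $dF$ and $dP$ to be linearly independent along the cone over $S$, i.e. that $D\cap(P=0)$ is smooth as a scheme.

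This is where I expect the main subtlety. The hypothesis says $S=D\cap(P=0)$ is smooth, and I will read this scheme-theoretically, which is legitimate in the maximal-degree setting. There the divisor cut out by $P$ on $D$ has multiplicity one, so it coincides with its reduced support $S$. If needed I will state explicitly that this is how the hypothesis is used.

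With this settled, $Y$ is smooth, and so is the divisor $Y_\infty\cong S$. It is moreover connected, since $S$ is irreducible.

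\emph{Step 3: cohomology of $Y$.} Since $Y$ is a smooth complete intersection of dimension $n-1\ge2$, the Lefschetz hyperplane theorem gives $\pi_1(Y)=0$. In particular $H^1(Y,\CC)=0$.

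\emph{Step 4: removing the divisor.} For the smooth divisor $Y_\infty$ of the smooth projective variety $Y$, the Gysin (Thom isomorphism) exact sequence reads
\[
 H^1(Y,\CC)\to H^1(M,\CC)\to H^0(Y_\infty,\CC)\xrightarrow{\ \gamma\ } H^2(Y,\CC).
\]
The map $\gamma$ sends $1$ to the fundamental class $[Y_\infty]$. This class is the hyperplane class of $Y$ and hence non-zero, since a power of it integrates positively over $Y$. Because $Y_\infty$ is connected, $H^0(Y_\infty,\CC)\cong\CC$, so $\gamma$ is injective. Combined with $H^1(Y,\CC)=0$, the sequence gives $H^1(M,\CC)=0$, which is \eqref{eq:H1Mzero}.

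As an alternative route, one could argue as in the untwisted Lemma~\ref{lem:untwisted-irreducible}: use simple connectivity of $Y$ and the fact that any loop in $M$ is a product of meridians around $Y_\infty$ to show that $H_1(M,\ZZ)$ is finite cyclic. The Gysin argument, however, is cleaner and is the one I would write.
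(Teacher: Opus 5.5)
Your argument is correct and has the same architecture as the paper's proof. Both compactify $M$ by the smooth projective $r$-fold cyclic cover of $D$ branched along $S$, show that its $H^1$ vanishes, and conclude with the Gysin sequence, using that the added divisor $\cong S$ is connected with non-zero class. Your complete intersection $Y\subset\PP^{n+1}$ is exactly the paper's $X$, and your model makes the identification $M=Y\cap\{w\neq0\}$ immediate.

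The only real difference is how you get $H^1(Y,\CC)=0$: you use the Lefschetz theorem for smooth complete intersections, whereas the paper uses $\pi_*\OO_X=\bigoplus_i L^{-i}$, $H^1(D,\OO_D(-i))=0$ and Hodge symmetry. Quote Lefschetz in its complete-intersection form, since the intermediate hypersurface $\{F=0\}\subset\PP^{n+1}$ is a cone singular at $[0:\cdots:0:1]$, so a naive iterated Lefschetz argument does not apply. Your scheme-theoretic reading of the smoothness hypothesis is exactly what the paper also needs for $X$ to be smooth.
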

\begin{proof}[proof of the claim]
To prove this, we realize $M$ as the complement of the ramification
divisor in a smooth projective cyclic cover of $D$. Let
\[
 L=\mathcal O_D(1).
\]
The homogeneous polynomial $P$ of degree $r$ restricts to a section
\[
 P|_D\in H^0(D,L^r),
\]
whose zero divisor is $S=D\cap(P=0)$. Let
\[
 \pi:X\longrightarrow D
\]
be the cyclic cover of degree $r$ associated with this section (for
$r=1$, simply take $X=D$). Since $S$ is smooth, $X$ is smooth
projective, and
\[
 \pi_*\mathcal O_X=\bigoplus_{i=0}^{r-1}L^{-i}.
\]
For every $i\ge0$ one has
\[
 H^1(D,\mathcal O_D(-i))=0.
\]
Indeed, this follows from
\[
 0\longrightarrow\mathcal O_{\PP^n}(-e-i)
 \longrightarrow\mathcal O_{\PP^n}(-i)
 \longrightarrow\mathcal O_D(-i)\longrightarrow0
\]
together with the standard vanishing
\[
 H^j(\PP^n,\mathcal O_{\PP^n}(k))=0,
 \qquad 0<j<n,\quad k\in\ZZ,
\]
for the intermediate cohomology of line bundles on projective space
(see \cite[Chapter~III, Theorem~5.1]{HartshorneAG}). Hence
$H^1(X,\mathcal O_X)=0$. Since $X$ is smooth and projective, the Hodge
decomposition theorem (see \cite[Section~6.1]{VoisinHodge}) gives
\[
 H^1(X,\CC)=H^{1,0}(X)\oplus H^{0,1}(X),
\]
while Dolbeault theory (see \cite[Section~2.3.3]{VoisinHodge}) identifies
$H^{0,1}(X)\simeq H^1(X,\mathcal O_X)$. Thus $H^{0,1}(X)=0$, and by
Hodge symmetry also $H^{1,0}(X)=0$. Consequently,
\[
 H^1(X,\CC)=0.
\]

Let $R_X\subset X$ be the ramification divisor of the cyclic cover
$\pi:X\to D$; when $r=1$ we put $R_X=S$. The restriction
$\pi|_{R_X}:R_X\to S$ is an isomorphism. Since $S$ is smooth and
irreducible, $R_X$ is connected, and therefore
\[
 H^0(R_X,\CC)\simeq\CC.
\]
For the smooth divisor $R_X\subset X$, the cohomology of the complement
fits into the standard Gysin long exact sequence, obtained from the
cohomology sequence of the pair $(X,X\setminus R_X)$ together with the
Thom isomorphism; see, for instance, \cite{BottTu}. The relevant segment
is
\[
 H^1(X,\CC)\longrightarrow H^1(X\setminus R_X,\CC)
 \longrightarrow H^0(R_X,\CC)
 \xrightarrow{\mathrm{Gys}} H^2(X,\CC).
\]
The Gysin map sends $1\in H^0(R_X,\CC)$ to the divisor class
$[R_X]\in H^2(X,\CC)$, equivalently to the Poincar\'e dual of $R_X$.
This class is non-zero. Indeed, if $h$ is the first Chern class of an
ample line bundle on $X$ and $N=\dim X$, then
\[
 \int_X [R_X]\smile h^{N-1}
 =\int_{R_X}h^{N-1}>0,
\]
because the restriction of an ample class to the non-zero effective
divisor $R_X$ is ample. Hence $[R_X]\neq0$. Since
$H^0(R_X,\CC)$ is one-dimensional, the Gysin map is injective.
Together with $H^1(X,\CC)=0$, exactness gives
\[
 H^1(X\setminus R_X,\CC)=0.
\]

It remains to identify this complement with $M$. If $u\in M$, then
$F(u)=0$ and $P(u)=1$, so its projective class $[u]$ belongs to
$D\setminus S$. Conversely, for each $[u]\in D\setminus S$, the
homogeneity relation $P(\lambda u)=\lambda^rP(u)$ shows that there are
exactly $r$ choices of scalar $\lambda$ for which $P(\lambda u)=1$.
They differ by the action of $\mu_r$. Thus
\[
 M\longrightarrow D\setminus S,\qquad u\longmapsto[u],
\]
is the unramified cyclic cover of degree $r$ obtained by restricting
$X\to D$ away from the branch divisor. Consequently
\[
 M\simeq X\setminus R_X.
\]
Therefore
\[
 H^1(M,\CC)=H^1(X\setminus R_X,\CC)=0,
\]
which proves \eqref{eq:H1Mzero}.
\end{proof}

We now finish the proof of Lemma~\ref{lem:smooth-twisted}. Since $M$ is smooth affine, algebraic de Rham theory now gives
\[
 \alpha_M=dg
\]
for some $g\in\mathcal O(M)$.  Let $\zeta$ be a primitive $r$-th root
of unity.  The deck transformation $u\mapsto\zeta u$ satisfies
$\zeta^*\alpha_M=\zeta^q\alpha_M$.  Replacing $g$ by its
$\zeta^q$-isotypical component (and subtracting a constant if $q=r$),
we may assume
\[
 g(\zeta u)=\zeta^q g(u).
\]

Put
\[
 V=\widehat D\cap\{P\ne0\}.
\]
The map
\[
 \CC^*\times M\longrightarrow V,\qquad (\rho,u)\longmapsto\rho u,
\]
is an $r$-fold cover.  Homogeneity and $i_R\alpha=0$ give
\[
 \alpha_{\rho u}=\rho^q\alpha_u
\]
in pull-back form.  Consequently $\rho^qg(u)$ descends to a homogeneous
regular function $f$ of degree $q$ on $V$, and
\begin{equation}\label{eq:smooth-descent}
 \alpha=df-\frac qr f\,\frac{dP}{P}
\end{equation}
on $V$, as a differential form on $\widehat D$.

Let
\[
 A_D=\CC[x_0,\ldots,x_n]/(F)
\]
be the homogeneous coordinate ring of the affine cone.  Since
$S$ is irreducible, the image of $P$ in $A_D$ is prime.  Write, with
minimal $k\ge0$,
\[
 f=\frac{G}{P^k},\qquad
 G\in A_D\ \hbox{homogeneous of degree }q+kr.
\]
Equation \eqref{eq:smooth-descent} gives in $\Omega^1_{A_D}$
\[
 P^{k+1}\alpha
 =P\,dG-\left(k+\frac qr\right)G\,dP.
\]
Reducing modulo $P$, and using that $S$ is reduced, we obtain $P\mid G$.
Indeed, $dP$ is non-zero at the generic point of $S$.  This contradicts
minimality if $k>0$, so $k=0$.  The same argument then gives $P\mid G$.
If $q<r$, homogeneity forces $G=0$; if $q=r$, then $G=cP$ and
\eqref{eq:smooth-descent} again gives $\alpha=0$ in
$\Omega^1_{A_D}$.  Thus the pull-back of $\alpha$ to $D$ vanishes.
\end{proof}

Now we are ready to prove our second main result:

\begin{proof}[Proof of Theorem~\ref{thm:smooth-hypersurface}]
A degree-$d$ foliation is represented by an integrable homogeneous
$1$-form whose coefficients have degree $d+1$.  Since $D=(F=0)$ is
invariant and smooth, the conormal sequence gives a decomposition
\begin{equation}\label{eq:Fadic-start}
 \Omega=A\,dF+F\beta,
\end{equation}
where $A$ is homogeneous of degree
\[
 r=d-e+2.
\]
The Euler condition yields
\begin{equation}\label{eq:Fadic-Euler}
 i_R\beta=-eA.
\end{equation}
Notice that the positivity of $r$ need not be assumed separately.
Indeed, the primitiveness of $\Omega$ forces $A\ne0$: if $A=0$, then
\eqref{eq:Fadic-start} would give $\Omega=F\beta$, so all coefficients
of $\Omega$ would have the non-constant common factor $F$.  Hence
$r\ge0$.  If $r=0$, then $A$ is a non-zero constant and, along the
smooth hypersurface $D$,
\[
 \Omega|_D=A\,dF.
\]
Since $dF$ does not vanish on the smooth locus of the affine cone over
$D$ away from the origin, this would imply
$D\cap\Sing(\F)=\varnothing$, contradicting the hypothesis that $S$ is
a hypersurface of $D$.  Therefore $r>0$ follows automatically.

The divisor of $A|_D$ belongs to $|\mathcal O_D(r)|$ and therefore has
projective degree $er$.  By the hypotheses on $S$, this divisor is
exactly $S$ with multiplicity one.

For a homogeneous polynomial $Q$ of degree $r$, set
\[
 \Omega_Q=Q\,dF-\frac erF\,dQ.
\]
It is integrable and
\[
 \frac{\Omega_Q}{FQ}
 =\frac{dF}{F}-\frac er\frac{dQ}{Q}
 =-\frac1r\,d\log\!\left(\frac{Q^e}{F^r}\right),
\]
so $Q^e/F^r$ is a rational first integral.

Starting with $Q=A$, equations \eqref{eq:Fadic-start} and
\eqref{eq:Fadic-Euler} give
\[
 \Omega=\Omega_Q+F\alpha_0,
 \qquad
 \alpha_0=\beta+\frac er dA,
 \qquad
 i_R\alpha_0=0.
\]
The form $\alpha_0$ has weight $r$.

Suppose inductively that
\begin{equation}\label{eq:Fadic-induction}
 \Omega=\Omega_Q+F^{j+1}\alpha_j,
 \qquad
 i_R\alpha_j=0,
\end{equation}
where $Q$ has degree $r$ and $\alpha_j$ has weight
\[
 q_j=r-je>0.
\]
Taking the first non-zero coefficient of
$\Omega\wedge d\Omega=0$ along $F=0$ gives
\begin{equation}\label{eq:Fadic-twisted}
 Q\,d\alpha_j+\frac{q_j}{r}\alpha_j\wedge dQ=0
 \qquad\hbox{on }\widehat D.
\end{equation}
Indeed, writing $\delta=F^{j+1}\alpha_j$ and using
\[
 d\Omega_Q=\left(1+\frac er\right)dQ\wedge dF,
\]
the coefficient of $F^{j+1}$ in
$\Omega_Q\wedge d\delta+\delta\wedge d\Omega_Q$ is
\[
 dF\wedge\left(
 Q\,d\alpha_j+\frac{r-je}{r}\alpha_j\wedge dQ
 \right).
\]
Restriction to $D$ gives \eqref{eq:Fadic-twisted}.

The divisor $(Q=0)$ on $D$ is still $S$: all changes of $Q$ below are
multiples of $F$, hence do not change $Q|_D$.  Lemma~\ref{lem:smooth-twisted}
applied with $P=Q$ therefore shows that the restriction of
$\alpha_j$ to $D$ vanishes.  Consequently
\begin{equation}\label{eq:Fadic-divide}
 \alpha_j=h_j\,dF+F\beta_j
\end{equation}
for a homogeneous polynomial $h_j$ of degree $q_j-e$ (with $h_j=0$ if
$q_j<e$).  Contracting with the Euler field gives
\[
 i_R\beta_j=-e h_j.
\]

If $q_j<e$, then $h_j=0$ and $\beta_j$ would have non-positive weight,
so $\alpha_j=0$.  If $q_j=e$, then $h_j$ is constant and $\beta_j$ has
weight zero, hence $\beta_j=0$; horizontality then forces $h_j=0$.
Thus again $\alpha_j=0$.

Assume finally that $q_j>e$.  Put
\[
 B_j=\frac{r}{q_j-e}F^{j+1}h_j,
\]
which is homogeneous of degree $r$.  A direct computation gives
\[
 \Omega_{B_j}
 =F^{j+1}h_j\,dF
 -\frac{e}{q_j-e}F^{j+2}dh_j.
\]
Replacing $Q$ by $Q+B_j$ in \eqref{eq:Fadic-induction} yields
\[
 \Omega=\Omega_{Q+B_j}+F^{j+2}\alpha_{j+1},
\]
where
\[
 \alpha_{j+1}=\beta_j+\frac{e}{q_j-e}dh_j.
\]
This form is horizontal and has weight
\[
 q_{j+1}=q_j-e=r-(j+1)e.
\]
Thus the induction continues.  Since the weights decrease by $e$, it
terminates after finitely many steps, and the final remainder is zero.
Hence
\[
 \Omega=Q\,dF-\frac erF\,dQ
\]
for a homogeneous polynomial $Q$ of degree $r$, as claimed.
\end{proof}

\begin{proof}[Proof of Corollary~\ref{cor:quadric-cubic}]
The $F$-adic construction in the proof of
Theorem~\ref{thm:smooth-hypersurface} starts with a horizontal
remainder $\alpha_0$ of weight
\[
 q_0=r
\]
and, after each non-zero step, decreases the weight by $e$:
\[
 q_j=r-je.
\]
If $r\le e$, then $q_1=r-e\le0$, so no non-trivial twisted-character
step can occur. The only vanishing statement needed is the initial one
with $q_0=r$, and Lemma~\ref{lem:untwisted-irreducible} applies because
$S$ is reduced and irreducible; smoothness of $S$ is unnecessary.
The remainder of the $F$-adic argument is unchanged and gives
\[
 \Omega=Q\,dF-\frac erF\,dQ.
\]
For a quadric hypersurface, $e=2$ and $r=d$, so $0<r\le2$ means
$d=1,2$. For a cubic hypersurface, $e=3$ and $r=d-1$, so
$0<r\le3$ means $d=2,3,4$.
\end{proof}

\section{A few counterexamples on the hypotheses}\label{sec:sharpness}

In this section we show that the main hypotheses of
Theorem~\ref{thm:main} cannot be dropped. We first construct a non-reduced
example showing that the maximal-degree condition on the reduced
singular support cannot be replaced by a purely set-theoretic
assumption. We then give global counterexamples showing that both the
prime-power condition and the irreducibility of the reduced singular
support are essential.

\subsection{The $\deg S=d+1$ maximal-degree condition: a non-reduced example}
The following family shows what can occur once the maximal-degree
hypothesis
\[
\deg\bigl(H\cap\Sing(\F)\bigr)_{\rm red}=d+1
\]
is dropped. In the family below the reduced singular support remains
irreducible, but has degree strictly smaller than $d+1$.
We begin with a family whose reduced singular support on the invariant
hyperplane is irreducible and has finite non-trivial meridional
holonomy, but whose transverse singular scheme is non-reduced and which
admits no non-constant rational first integral.

Let $P,G$ be homogeneous of the same degree $m\geq2$, with $P$
irreducible and $P\nmid G$. Define
\begin{equation}\label{eq:family}
 \Omega_{P,G}=P^2dt+
 t\left(P\,dG-G\,dP-\frac1mP\,dP\right).
\end{equation}

\begin{proposition}\label{prop:family}
The 1-form \eqref{eq:family} is homogeneous, projective, primitive and
integrable. Its coefficients have degree $2m$, so it defines a
codimension-one foliation of degree
\[
 2m-1
\]
on $\PP^n$. Moreover,
\[
 \frac{\Omega_{P,G}}{tP^2}
 =\frac{dt}{t}-\frac1m\frac{dP}{P}
 +d\left(\frac GP\right).
\]
A positively oriented meridian around a smooth point of $\{P=0\}$
has transverse holonomy
\[
 \tau\longmapsto e^{2\pi i/m}\tau,
\]
up to the opposite convention for the orientation of the meridian.
Moreover, $\mathcal F_{P,G}$ admits no non-constant rational first
integral.
\end{proposition}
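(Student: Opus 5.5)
The plan is to derive almost everything from the logarithmic identity, and then treat primitivity and the absence of rational first integrals as the two places needing real arguments. First I check the identity by dividing \eqref{eq:family} by $tP^2$:
\[
 \frac{\Omega_{P,G}}{tP^2}=\frac{dt}{t}+\frac{P\,dG-G\,dP}{P^2}-\frac1m\frac{dP}{P}
 =\frac{dt}{t}-\frac1m\frac{dP}{P}+d\!\left(\frac GP\right).
\]
The right-hand side is a closed rational $1$-form, so $\eta=\Omega_{P,G}/(tP^2)$ satisfies $d\eta=0$ and hence $\eta\wedge d\eta=0$. Multiplying back by $tP^2$ preserves the Frobenius condition, which gives integrability. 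Homogeneity is immediate: every coefficient has degree $2m$, since $P^2$ has degree $2m$ and $t$ times a degree-$(2m-1)$ coefficient also has degree $2m$. For the projective condition I contract with $R$ and use Euler's identity $i_R\,dP=mP$, $i_R\,dG=mG$. This gives
\[
 i_R\Omega=tP^2+t\bigl(mPG-mGP-P^2\bigr)=0 .
\]
So the form defines a foliation of degree $2m-1$, provided it is primitive.

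Primitivity is where the hypotheses $P$ irreducible and $P\nmid G$ enter. Suppose an irreducible $h$ divides all coefficients. The $dt$-coefficient is $P^2$, so $h=P$ up to a constant. Reducing the $dX$-part modulo $P$ leaves $-tG\,dP$. Since $P$ is reduced, some $\partial P/\partial x_i$ is not divisible by $P$, and combined with $P\nmid G$ this makes the reduction non-zero, a contradiction. Here $P$ should be understood as a polynomial in the $X$ variables, independent of $t$. If $P$ involved $t$, the same argument still works, because the $dt$-component of $-tG\,dP$ is handled together with $P^2$, which is divisible by $P$.

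For the holonomy, I take a transversal $\tau=t$ at a smooth point of $\{P=0\}\cap H$. Away from $tP=0$ the leaves are level sets of $t\,P^{-1/m}\exp(G/P)$, which is a local multivalued first integral. Going once around a meridian of $\{P=0\}$ inside a leaf close to $H$, the factor $\exp(G/P)$ returns to itself because $G/P$ is single-valued, while $P^{-1/m}$ picks up $e^{-2\pi i/m}$. Hence $t\mapsto e^{\pm2\pi i/m}t$, which is the stated holonomy up to the orientation convention.

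The main obstacle is showing there is no non-constant rational first integral. I would argue by contradiction. A rational first integral $R$ gives $dR\wedge\eta=0$. By the standard fact that a foliation with a rational first integral defined by a closed logarithmic form has $\eta$ proportional to $d\log$ of a rational function, I expect to be able to write $dR=\phi\,\eta$ with $\phi$ rational and $\eta$ a multiple of $d\log$ of a rational function. Comparing residues would then finish: a closed rational $1$-form of the shape $\sum\lambda_i\,df_i/f_i+d(g)$, admitting such an integral, must have zero exact part after normalization. That is, $\eta$ must be a rational multiple of a purely logarithmic form, so $d(G/P)$ would have to vanish, forcing $G/P$ constant, which contradicts $P\nmid G$ (note $\deg G=\deg P$).

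To make this rigorous I would use the integrating-factor uniqueness. If $dR\wedge\eta=0$ with $R$ non-constant, then $\eta'=dR/R$-type closed forms generate all closed forms proportional to $\eta$. Two closed rational forms proportional to each other have a ratio that is a rational first integral, so $\eta=\psi(R)\,dR$. The form $\psi(R)\,dR$ has poles along fibres of $R$ with residues governed by $\psi$. The polar part of $\eta$ along $P=0$ has order $2$, coming from $d(G/P)$, while $\eta$ has a simple pole along $t=0$ with residue $1$. I would then check that these pole orders along two different fibre components are incompatible with $\psi(R)\,dR$, for instance by restricting to a general line and computing. This residue/pole-order bookkeeping is the delicate step.
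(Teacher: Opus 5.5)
Your checks of homogeneity, the Euler condition, primitivity, integrability and the holonomy are correct, and they are essentially the paper's argument. Continuing $t\,P^{-1/m}e^{G/P}$ around the meridian is just the integrated form of the paper's transverse equation $dt/t=\frac1m\,dP/P-d(G/P)$.

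The gap is in the last claim, and neither of your two sketches closes it.

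\emph{First sketch.} The ``standard fact'' you quote concerns logarithmic forms, and $\eta$ is not logarithmic. The lemma you then want is false: a closed form $\sum\lambda_i\,df_i/f_i+dg$ admitting a rational first integral need not have zero exact part. For example, $\eta=dR/R+dR$ with $R=x/y$ has the first integral $R$ and exact part $dR\neq0$.

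\emph{Second sketch.} Writing $\eta=\psi(R)\,dR$ requires $R$ to generate the field of rational first integrals; this is available by the generalized L\"uroth theorem. Granting that, the pole-order bookkeeping still gives no contradiction. Suppose $\{t=0\}$ is a component of multiplicity $a$ of a fibre $R^{-1}(s_0)$ at which $\psi(s)\,ds$ has a simple pole of residue $1/a$. Suppose $\{P=0\}$ is a reduced component of another fibre $R^{-1}(s_1)$ at which $\psi(s)\,ds$ has a double pole of residue $-1/m$. Then $\psi(R)\,dR$ has exactly the polar data of $\eta$ along both divisors. Local pole orders and residues therefore cannot detect the obstruction.

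The missing ingredient is the uniqueness of the logarithmic/exact decomposition of a closed rational $1$-form. Decompose $\psi(s)\,ds=\sum_j\lambda_j\,\frac{ds}{s-s_j}+dg(s)$ by partial fractions, so that
\[
 \frac{dt}{t}-\frac1m\frac{dP}{P}-\sum_j\lambda_j\,\frac{d(R-s_j)}{R-s_j}
 =d\Bigl(g(R)-\frac GP\Bigr).
\]
The left-hand side is a closed logarithmic form and the right-hand side is exact. An exact rational form has zero residue along every irreducible hypersurface, so both sides vanish. Hence $d(G/P)=g'(R)\,dR$, and $G/P$ would itself be a first integral. This is impossible, since
\[
 d\Bigl(\frac GP\Bigr)\wedge\eta
 =d\Bigl(\frac GP\Bigr)\wedge\frac{dt}{t}
 -\frac1m\,d\Bigl(\frac GP\Bigr)\wedge\frac{dP}{P}.
\]
Here $d(G/P)$ is a non-zero form in the $dX$-directions only, because $P\nmid G$ and $\deg G=\deg P$. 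So the first term is non-zero and contains $dt$, while the second contains no $dt$, and the sum cannot vanish.

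With this step your route becomes a purely algebraic alternative to the paper's. The paper instead integrates $\eta$ to the Liouvillian first integral $t^mP^{-1}\exp(mG/P)$. It then argues that a generic leaf $t^m=C\,P\exp(-mG/P)$ cannot be algebraic, because of the essential singularity along $P=0$.
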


\begin{proof}
Since $P$ and $G$ are homogeneous of degree $m$, every coefficient of
$\Omega_{P,G}$ is homogeneous of degree $2m$. Euler's identity gives
\[
 i_R(P\,dG-G\,dP)=0,\qquad
 i_R\left(-\frac1mP\,dP\right)=-P^2,
\]
which proves $i_R\Omega_{P,G}=0$.

The form is primitive. Indeed, any common non-constant factor of its
coefficients must divide the coefficient $P^2$ of $dt$. Since $P$ is
irreducible, such a factor would be divisible by $P$. However, modulo
$P$ the tangential part satisfies
\[
 P\,dG-G\,dP-\frac1mP\,dP\equiv-G\,dP\pmod P.
\]
Because $P\nmid G$ and, in characteristic zero, $P$ does not divide
all coefficients of $dP$, the tangential coefficients are not all
divisible by $P$. Hence $\Omega_{P,G}$ is primitive and defines a
degree-$(2m-1)$ foliation.

Furthermore,
\[
 \frac{P\,dG-G\,dP}{P^2}=d(G/P),
\]
so the displayed rational $1$-form is closed and
$\Omega_{P,G}$ is integrable.

Finally, along $H\setminus\{P=0\}$ the transverse equation is
\[
 \frac{dt}{t}
 =
 \frac1m\frac{dP}{P}-d(G/P).
\]
If $\gamma$ is a positively oriented small meridian around a smooth
point of $\{P=0\}$, then
\[
 \int_\gamma\frac{dP}{P}=2\pi i,
 \qquad
 \int_\gamma d(G/P)=0.
\]
Therefore the holonomy germ in a transverse coordinate $\tau$ is
exactly
\[
 \tau\longmapsto e^{2\pi i/m}\tau.
\]

Finally, multiplying the closed rational form above by $m$ and
integrating gives the Liouvillian first integral
\[
I=\frac{t^m}{P}\exp\left(\frac{mG}{P}\right).
\]
Since $P\nmid G$, at a general smooth point of $(P=0)$ one has $G\ne0$.
Taking $P$ as a local transverse coordinate, the equation of a leaf
$I=C$ can be written
\[
t^m
=
C\,P\,\exp\left(-\frac{mG}{P}\right).
\]
The right-hand side has an essential singularity along $P=0$. More
precisely, after normalization an algebraic branch meeting a general
smooth point of $(P=0)$ would admit a Puiseux expansion in a local
parameter, whereas the exponential term above has an essential
singularity because $G$ does not vanish generically on $(P=0)$. Hence
a generic leaf is not algebraic. Since the generic leaves of a foliation
admitting a non-constant rational first integral are algebraic,
$\mathcal F_{P,G}$ admits no non-constant rational first integral.
\end{proof}

\subsection{Sharpness of the prime-power assumption}

The arithmetic hypothesis $m=d+1=p^s$ in Theorem~\ref{thm:main} is sharp. The construction below is the
global counterpart of the optimality example of Cerveau--Loray
\cite[Section~3]{CerveauLoray}.

\begin{proposition}[Sharpness of the prime-power hypothesis]\label{prop:primepower-counter}
Let $m\ge2$ be an integer which is not a power of a prime. Then there
exists a degree-$(m-1)$ codimension-one foliation $\mathcal G$ on
$\PP^3$ admitting an invariant hyperplane $H$ such that
\[
 S=\bigl(H\cap\Sing(\mathcal G)\bigr)_{\rm red}
\]
is irreducible of degree
\[
 \deg S=m=d+1,
\]
but $\mathcal G$ has no non-constant rational first integral. In
particular, the prime-power hypothesis in Theorem~\ref{thm:main} cannot
be removed.
\end{proposition}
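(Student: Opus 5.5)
The plan is to realize the obstruction described in the Remark after Lemma~\ref{lem:vanishing} by a pull-back from a weighted projective plane, in the spirit of the Cerveau--Loray optimality example. Since $m$ is not a prime power, I will write $m=ab$ with $a,b\ge2$ and $\gcd(a,b)=1$. On $\PP^3$ with coordinates $[t:x:y:z]$ I will choose generic homogeneous polynomials $f(x,y,z)$ of degree $a$ and $g(x,y,z)$ of degree $b$, and put $P=f^b+g^a$, which has degree $m$. For generic $f,g$ the curve $\{P=0\}\subset\PP^2$ is reduced and irreducible; this is the classical Zariski-type curve whose Alexander polynomial contains $\zeta$ of order $m$.

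\textbf{Step 1: the model foliation on the weighted plane.} On $\CC^3$ with coordinates $(t,u,v)$ and weights $(1,a,b)$ I will consider
\[
 \omega=(u^b+v^a)\,dt-\frac tm\,d(u^b+v^a)+c\,t^{k}\,(b\,v\,du-a\,u\,dv),
 \qquad k=m+1-a-b\ge1 .
\]
Each term has weighted degree $m+1$. Moreover $\omega$ is annihilated by the weighted Euler field $t\partial_t+au\partial_u+bv\partial_v$, because $i_E(b v\,du-a u\,dv)=abuv-abuv=0$.

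\textbf{Step 2: pull back and check integrability.} Pulling back by $(t,x,y,z)\mapsto(t,f,g)$ gives
\[
 \Omega=P\,dt-\frac tm\,dP+c\,t^{k}\bigl(b\,g\,df-a\,f\,dg\bigr).
\]
This form is homogeneous with coefficients of degree $m$, and it is horizontal for the usual Euler field. It is integrable because it is pulled back from a form in three variables annihilated by a vector field $E$, that is, from a $1$-form on the $2$-dimensional quotient $\PP(1,a,b)$. Explicitly, $\omega\wedge d\omega$ is a $3$-form on $\CC^3$ with $i_E(\omega\wedge d\omega)=0$. This forces $\omega\wedge d\omega=0$, because $i_E\omega=0$ and $L_E\omega=(m+1)\omega$ give $i_Ed\omega=(m+1)\omega$.

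\textbf{Step 3: degree, invariance and singular set.} For generic $f,g$ I will check that $\Omega$ is saturated. Any common factor must divide $P$, hence equal $P$. Modulo $P$ the $dX$-part reduces to $-\frac tm dP+ct^k(b g\,df-a f\,dg)$, which is not divisible by $P$. So $\Omega$ defines a foliation of degree $m-1$, and $H=(t=0)$ is invariant. On $H$ the $dt$-coefficient is $P$, and the restricted tangential part vanishes. Hence $(H\cap\Sing)_{\rm red}=\{P=0\}$, which is irreducible of degree $m=d+1$.

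\textbf{Step 4: no rational first integral (the main obstacle).} I expect Step~4 to be the hard part, and the first-order computation explains why the example is not trivially integrable. The first deviation from the normal form $\widehat\Omega$ of \eqref{eq:model} is $t^k(b g\,df-a f\,dg)$, up to corrections. By Lemma~\ref{lem:first} it satisfies the twisted equation with $q=m-k+1=a+b-1$. Its class on the Milnor fiber is pulled back from the affine curve $u^b+v^a=1$, where it is the holomorphic differential $(b v\,du-a u\,dv)$ with $\zeta^q$ a root of the Alexander polynomial; this class is non-zero. So the model form cannot be conjugated into the normal form of Theorem~\ref{thm:main}.

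To exclude every rational first integral, I would argue on $\PP(1,a,b)$. Any rational first integral of $\F$ is constant on the fibres of $[t:x:y:z]\mapsto[t:f:g]$. Therefore, after averaging or taking norms, it descends to a rational first integral of the planar foliation $\omega=0$.

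For that planar foliation I will work in the affine chart $t=1$, where $\omega$ becomes
\[
 -\tfrac1m\,d(u^b+v^a)+c\,(b v\,du-a u\,dv).
\]
I would show, for generic $c$, that the holonomy of the invariant curve $\{u^b+v^a=\text{const}\}$ is non-periodic, or equivalently that the associated Darboux/Liouvillian integrating factor is transcendental. Concretely, I would compute the holonomy around the leaf $t=0$ using the non-zero period of the differential $b v\,du-a u\,dv$ on the Fermat-type curve. Periodicity fails for generic $c$, so no rational first integral exists. If this direct route stalls, the fallback is to specialize to $a=2$, $b=3$, compute explicitly there, and then deform.
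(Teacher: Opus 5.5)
Your Steps 1--3 are correct and produce exactly the paper's family $\Omega_c=P\,dt-\frac tm\,dP+c\,t^k(bg\,df-af\,dg)$. Your integrability argument is a valid and cleaner substitute for the paper's direct computation: on $\CC^3$, $i_E(\omega\wedge d\omega)=0$ forces $\omega\wedge d\omega=0$. There are two slips. First, $q=m-k+1=a+b$, not $a+b-1$. Second, since $\gcd(a,b)=1$ one of $a,b$ is at least $3$, so $k=(a-1)(b-1)\ge2$.

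The gap is Step~4, which carries the whole content of the proposition. It is left as a plan, partly aimed at the wrong object.
\begin{enumerate}
\item In the chart $t=1$ the curves $u^b+v^a=\mathrm{const}$ are not invariant for $c\neq0$, so they have no holonomy to compute.
\item The relevant leaf $\{t=0\}$ is not in that chart.
\item The claimed equivalence with transcendence of a Liouvillian integrating factor is unsupported.
\item The fallback ``treat $a=2$, $b=3$, then deform'' cannot reach other values of $m$.
\item Non-vanishing of the class of $bv\,du-au\,dv$ on $C_0=\{u^b+v^a=1\}$ does not give non-vanishing on $F=\{P=1\}$ unless you prove that $H^1(C_0,\CC)\to H^1(F,\CC)$ is injective.
\item The descent of a first integral to $\PP(1,a,b)$ is an extra step you only gesture at.
\end{enumerate}

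What is needed is the following chain, which is the paper's argument.
\begin{enumerate}
\item The map $\CC\times F\to\PP^3$, $(t,u)\mapsto[t:u]$, is a finite unramified cover of $\{[t:X]:P(X)\neq0\}\supset H\setminus S$.
\item Since $P\equiv1$ and $dP|_F=0$, the pulled-back foliation is $dt+c\,t^k\alpha_F=0$, with $\alpha_F=\alpha|_F$ closed.
\item As $k\ge2$, $d(t^{1-k})=(k-1)c\,\alpha_F$ along leaves. Hence the holonomy of $H\setminus S$ along a loop $\gamma\subset F$ satisfies $h_\gamma(t)^{1-k}=t^{1-k}+(k-1)c\int_\gamma\alpha_F$. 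As soon as some period of $\alpha_F$ is non-zero, this is a non-trivial germ tangent to the identity, hence of infinite order for every $c\neq0$.
\item The paper gets a non-zero period by rerunning the descent and minimal-denominator argument of Lemma~\ref{lem:vanishing}, which never uses the prime-power hypothesis: $[\alpha_F]=0$ would force $\alpha=0$. Your observation that $\alpha|_{C_0}=b\,du/v^{a-1}$ is a non-zero holomorphic differential on a curve of positive genus is a correct and more elementary fact. To use it, though, you must transport a non-zero period from $C_0$ to $F$.
\item You must also state the link you leave implicit. Near a generic point of $H$, a rational first integral $R$ can be written $R-R(H)=\tau^\nu u(\tau)$ with $u(0)\neq0$. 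Holonomy preserves $R$, so every holonomy germ is conjugate to a rotation by a $\nu$-th root of unity, contradicting the infinite-order germ.
\end{enumerate}
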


\begin{proof}
Since $m$ is not a prime power, we can write
\[
 m=ab,
 \qquad a,b>1,
 \qquad \gcd(a,b)=1.
\]
Choose sufficiently general homogeneous polynomials
$A,B\in\CC[x,y,z]$ of degrees $a$ and $b$, respectively, such that
\[
 P=A^b+B^a
\]
is irreducible; this is the same torus-type construction used by
Cerveau--Loray in their optimality argument. Put
\[
 \alpha=bB\,dA-aA\,dB.
\]
The form $\alpha$ is homogeneous of weight
\[
 q=a+b<m=ab
\]
and is horizontal. Indeed, with $R_H$ the Euler field in the
$(x,y,z)$-variables,
\[
 i_{R_H}\alpha
 =bB(aA)-aA(bB)=0.
\]
Moreover,
\[
 d\alpha=-(a+b)\,dA\wedge dB,
\]
while
\[
 dP=bA^{b-1}dA+aB^{a-1}dB.
\]
Hence
\[
 \alpha\wedge dP
 =ab\,(A^b+B^a)\,dA\wedge dB
 =mP\,dA\wedge dB,
\]
and therefore
\begin{equation}\label{eq:primepower-twisted}
 P\,d\alpha+\frac{a+b}{ab}\,\alpha\wedge dP=0.
\end{equation}
Thus $\alpha$ is a non-zero solution of the same twisted equation which
appears in the proof of Theorem~\ref{thm:main}.

Set
\[
 k=m-a-b+1=(a-1)(b-1)\ge2
\]
and, for $c\in\CC^*$, define on $\CC^4$ with homogeneous coordinates
$(t,x,y,z)$
\begin{equation}\label{eq:primepower-family}
 \Omega_c
 =P\,dt-\frac{t}{m}\,dP+c\,t^k\alpha.
\end{equation}
All coefficients of $\Omega_c$ are homogeneous of degree $m$ and
$i_R\Omega_c=0$, so the form is projective. It is primitive as well. In
fact, any common factor would divide the irreducible polynomial $P$, the
coefficient of $dt$; but the coefficient of $t$ in the tangential part
is $-(1/m)dP$, and a reduced irreducible polynomial cannot divide all
coefficients of its differential.

We next verify integrability. Write
\[
 \Omega_0=P\,dt-\frac{t}{m}dP,
 \qquad
 \delta=c\,t^k\alpha.
\]
Since $\Omega_0$ is integrable and $\alpha\wedge d\alpha=0$, a direct
calculation gives
\[
 \Omega_c\wedge d\Omega_c
 =c\,t^k dt\wedge
 \left(
 P\,d\alpha+\frac{a+b}{ab}\alpha\wedge dP
 \right),
\]
which vanishes by \eqref{eq:primepower-twisted}. Hence $\Omega_c$
defines a degree-$(m-1)$ foliation $\mathcal G_c$ on $\PP^3$.

The hyperplane $H=(t=0)$ is invariant and
\[
 \Omega_c|_H=P\,dt.
\]
Consequently
\[
 \bigl(H\cap\Sing(\mathcal G_c)\bigr)_{\rm red}=(P=0),
\]
which is irreducible of degree $m=d+1$.

It remains to prove that $\mathcal G_c$ has no non-constant rational
first integral. Let
\[
 F=\{P=1\}\subset\CC^3.
\]
The restriction $\alpha_F:=\alpha|_F$ is closed by
\eqref{eq:primepower-twisted}. We claim that
\[
 [\alpha_F]\neq0\quad\hbox{in }H^1(F,\CC).
\]
Indeed, if $[\alpha_F]=0$, then the descent and minimal-denominator
argument used in the proof of Lemma~\ref{lem:vanishing} applies verbatim
without any prime-power assumption and forces $\alpha=0$, a
contradiction. Thus there exists a closed loop $\gamma\subset F$ such
that
\[
 \int_\gamma\alpha_F\neq0.
\]

The map
\[
 \Phi:\CC\times F\longrightarrow\PP^3,
 \qquad
 (t,u)\longmapsto[t:u],
\]
is a finite unramified covering of the open set
\[
 V=\{[t:X]\in\PP^3:P(X)\neq0\},
\]
which contains $H\setminus(P=0)$. The pull-back foliation is defined by
\[
 \Phi^*\Omega_c=dt+c\,t^k\alpha_F.
\]
Along the lifted leaf $\{t=0\}\times F$ the holonomy equation is
therefore
\[
 dt+c\,t^k\alpha_F=0.
\]
Since $k\ge2$, setting $r=k-1$ gives
\[
 d(t^{-r})=r c\,\alpha_F.
\]
Hence the holonomy germ $h_\gamma$ associated with $\gamma$ satisfies
\[
 h_\gamma(t)^{-r}
 =t^{-r}+r c\int_\gamma\alpha_F.
\]
The period is non-zero, so $h_\gamma$ is a non-trivial germ tangent to
the identity and has infinite order. Therefore the holonomy group of
the invariant leaf $H\setminus S$ is infinite.

Finally, a rational first integral would force this holonomy group to be
finite. Indeed, at a generic point of $H$ one can choose a transverse
coordinate $\tau$ and a local coordinate on the target so that a
rational first integral has the form
\[
 f-f(H)=\tau^\nu u(\tau),
 \qquad u(0)\neq0.
\]
After replacing $\tau$ by $\tau u(\tau)^{1/\nu}$, every holonomy germ
preserves $\tau^\nu$, hence is multiplication by a $\nu$-th root of
unity. This contradicts the infinite-order germ above. Thus
$\mathcal G_c$ has no non-constant rational first integral.
\end{proof}

\subsection{Sharpness of irreducibility}

The following global example shows that irreducibility is genuinely
necessary in Theorem~\ref{thm:main} even when the maximal-degree condition and the prime-power
condition are both satisfied.

Take $m=2$, $P=xy$, and
\[
 \alpha=y\,dx-x\,dy.
\]
Then
\[
 i_R\alpha=0,\qquad
 P\,d\alpha+\alpha\wedge dP=0.
\]
For $c\in\CC$ define
\begin{equation}\label{eq:reducible-family}
 \Omega_c
 =
 P\,dt-\frac t2\,dP+c\,t\alpha.
\end{equation}
Explicitly,
\[
 \Omega_c
 =
 xy\,dt
 +
 \left(c-\frac12\right)ty\,dx
 -
 \left(c+\frac12\right)tx\,dy.
\]

\begin{proposition}\label{prop:reducible-counter}
Assume $c\neq\pm\frac12$. Then $\Omega_c$ is homogeneous, projective,
primitive and integrable, and defines a degree-one foliation
$\F_c$ on $\PP^3$. The hyperplane $H=(t=0)$ is invariant and
\[
 \bigl(H\cap\Sing(\F_c)\bigr)_{\rm red}
 =
 \{xy=0\},
\]
which is a reduced reducible conic of degree
\[
 2=d+1=2^1.
\]
Moreover,
\[
 \F_c\text{ admits a non-constant rational first integral}
 \quad\Longleftrightarrow\quad c\in\QQ.
\]
\end{proposition}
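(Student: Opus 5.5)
The plan is to reduce everything to the explicit closed logarithmic form obtained by dividing $\Omega_c$ by $txy$. First I will record the routine facts. Each coefficient $xy$, $(c-\frac12)ty$, $-(c+\frac12)tx$ is homogeneous of degree $2$. The Euler contraction is $txy+(c-\frac12)txy-(c+\frac12)txy=0$. For primitivity, a common factor of the coefficients would have to divide $xy$. For $c\neq\pm\frac12$ the coefficient $(c-\frac12)ty$ is not divisible by $x$, and $(c+\frac12)tx$ is not divisible by $y$, so there is no such factor. For integrability I will not compute $\Omega\wedge d\Omega$ directly. Instead I will write
\[
 \frac{\Omega_c}{txy}=\frac{dt}{t}+\Bigl(c-\frac12\Bigr)\frac{dx}{x}-\Bigl(c+\frac12\Bigr)\frac{dy}{y},
\]
which is closed, so $\Omega_c\wedge d\Omega_c=0$. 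This gives a degree-one foliation on $\PP^3$, and the invariance of $H$ follows since $\Omega_c=xy\,dt+t(\cdots)$.

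Next I will compute the singular set on $H$. On $t=0$ the form reduces to $xy\,dt$, so $H\cap\Sing(\F_c)=\{t=xy=0\}$ set-theoretically. Points of $\{xy=0\}\subset H$ with $[x:y:z]$ arbitrary are indeed singular, because all three coefficients vanish there. The reduced support is therefore the reducible conic $\{xy=0\}$, of degree $2=d+1$.

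For the first integral criterion I will use the multiplicative integral $I=t\,x^{c-1/2}y^{-(c+1/2)}$, whose exponents sum to zero. If $c\in\QQ$, I write the exponents $(1,\,c-\frac12,\,-c-\frac12)$ over a common denominator $N$. Then $I^N=t^Nx^{N(c-1/2)}y^{-N(c+1/2)}$ is a monomial of total degree $0$, hence a non-constant rational first integral on $\PP^3$.

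Conversely, suppose $\F_c$ has a non-constant rational first integral $f$. I will compute the holonomy of the invariant leaf $H\setminus\{xy=0\}$ along a meridian around $x=0$. The transverse equation $dt/t=-(c-\frac12)dx/x+\dots$ gives the linear holonomy $t\mapsto e^{-2\pi i(c-1/2)}t$. As in the proof of Proposition~\ref{prop:primepower-counter}, the existence of $f$ forces every holonomy germ of $H$ to be a finite-order root of unity. Hence $c-\frac12\in\QQ$, that is, $c\in\QQ$.

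The main obstacle is justifying that finite holonomy argument at a generic point of $H$. I will cite the argument already given in the proof of Proposition~\ref{prop:primepower-counter}. Alternatively, I can argue directly: $f$ and $I$ define the same foliation, and the monodromy of $I$ shows the closed form has rationally dependent periods only when $c\in\QQ$.
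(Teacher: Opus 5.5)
Your proposal is correct. Everything except the implication ``non-constant rational first integral $\Rightarrow c\in\QQ$'' matches the paper: the degree, Euler, primitivity and singular-set checks, the closed logarithmic form $\Omega_c/(txy)$, and the monomial integral $t^Nx^{N(c-1/2)}y^{-N(c+1/2)}$ for $c\in\QQ$.

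For that converse the paper argues purely algebraically. It takes the commuting tangent vector fields $D_1=x\partial_x-a\,t\partial_t$ and $D_2=y\partial_y-b\,t\partial_t$, with $a=c-\frac12$ and $b=-c-\frac12$. It shows that for $a\notin\QQ$ every joint weight space of $\CC[t^{\pm1},x^{\pm1},y^{\pm1}]$ is one-dimensional. A Newton-polytope argument then shows that the numerator and denominator of an invariant rational function are semi-invariants of the same weight, hence proportional monomials.

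You instead read off the linear holonomy $t\mapsto e^{-2\pi i(c-1/2)}t$ of the leaf $H\setminus\{xy=0\}$ along a meridian of $\{x=0\}$. You then invoke the finite-holonomy criterion from the proof of Proposition~\ref{prop:primepower-counter}. Since the germ is already linear, finite order means $e^{-2\pi i(c-1/2)}$ is a root of unity, i.e.\ $c\in\QQ$; this also works for non-real $c$.

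Your route is shorter and fits the theme of the section: it isolates the meridional holonomy around a component of $S$ as the obstruction. It even rules out non-constant first integrals that are only holomorphic near the meridian loop.

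What it needs, and what you should write out rather than just cite, is the small bookkeeping behind the criterion:
\begin{itemize}
\item replace $f$ by $1/f$ if $H$ lies in the polar divisor, and note that $f|_H$ is then constant because $H\setminus\{xy=0\}$ is a connected leaf;
\item take the meridian near a generic point of $\{t=x=0\}$, so that it avoids the traces on $H$ of the indeterminacy and polar loci of $f$;
\item conclude that $f$ is holomorphic on a neighbourhood of the loop, and is a function of the transverse coordinate alone on a transversal.
\end{itemize}

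The paper's algebraic argument avoids all of these choices and determines the whole field of rational first integrals directly. The price is the weight and Newton-polytope computation.
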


\begin{proof}
All coefficients of $\Omega_c$ are homogeneous of degree two. Moreover,
\[
 i_R\Omega_c
 =
 txy
 +
 \left(c-\frac12\right)txy
 -
 \left(c+\frac12\right)txy
 =0,
\]
so $\Omega_c$ is projective.

If $c\neq\pm\frac12$, the coefficients
\[
 xy,\qquad
 \left(c-\frac12\right)ty,\qquad
 -\left(c+\frac12\right)tx
\]
have no common non-constant factor. Hence $\Omega_c$ is primitive and
defines a degree-one foliation.

Dividing by $txy$, we obtain
\[
 \frac{\Omega_c}{txy}
 =
 \frac{dt}{t}
 +
 \left(c-\frac12\right)\frac{dx}{x}
 -
 \left(c+\frac12\right)\frac{dy}{y},
\]
which is closed. Thus $\Omega_c$ is integrable. On $H$ we have
$\Omega_c|_H=xy\,dt$, so the reduced singular support on $H$ is
$\{xy=0\}$. In fact, for $c\neq\pm1/2$ one has
\[
 \Sing(\F_c)
 =\{t=x=0\}\cup\{t=y=0\}\cup\{x=y=0\},
\]
so the singular set has pure codimension two.

It remains to characterize rational integrability. Put
\[
 a=c-\frac12,\qquad b=-c-\frac12.
\]
On the algebraic torus the defining closed form is
\[
 \theta=
 \frac{dt}{t}+a\frac{dx}{x}+b\frac{dy}{y}.
\]
Two commuting tangent vector fields are
\[
 D_1=x\frac{\partial}{\partial x}
     -a\,t\frac{\partial}{\partial t},
 \qquad
 D_2=y\frac{\partial}{\partial y}
     -b\,t\frac{\partial}{\partial t}.
\]
Hence any rational first integral belongs to the common constant field
of $D_1$ and $D_2$ on $\CC(t,x,y)$.

Assume first that $c\notin\QQ$. Consider the Laurent polynomial ring
\[
 A=\CC[t^{\pm1},x^{\pm1},y^{\pm1}].
\]
A monomial $M=t^r x^s y^u$ has joint weight
\[
 \bigl(s-ar,\;u-br\bigr).
\]
If two Laurent monomials have the same joint weight, their exponent
difference $(r,s,u)\in\ZZ^3$ satisfies
\[
 s=ar,\qquad u=br.
\]
Since $a=c-1/2\notin\QQ$, the case $r\neq0$ is impossible. Hence
$r=s=u=0$, and every joint weight space of $A$ is one-dimensional.

We now justify carefully the semi-invariant step. Let
$R=F/G\in\CC(t,x,y)$ be invariant under both $D_1,D_2$, with
coprime $F,G\in A$. For $i=1,2$,
\[
 (D_iF)G-F(D_iG)=0.
\]
Since $A$ is a UFD and $F,G$ are coprime,
\[
 F\mid D_iF,\qquad G\mid D_iG.
\]
Thus $D_iF=h_iF$ for some $h_i\in A$. Because $D_i$ acts diagonally
on Laurent monomials,
\[
 \operatorname{Supp}(D_iF)\subseteq\operatorname{Supp}(F).
\]
On the other hand, if $D_iF=h_iF\neq0$, the Newton polytopes satisfy
\[
 \operatorname{Newt}(h_i)+\operatorname{Newt}(F)
 =\operatorname{Newt}(h_iF)
 \subseteq\operatorname{Newt}(F).
\]
Since $\operatorname{Newt}(F)$ is bounded, this forces
$\operatorname{Newt}(h_i)=\{0\}$, hence $h_i\in\CC$. The same
argument applies to $G$, and the identity above shows that $F$ and $G$
have the same eigenvalue for each $D_i$. Therefore they are simultaneous
semi-invariants with the same joint weight. Since every joint weight
space is one-dimensional, $F$ and $G$ are proportional Laurent
monomials, and $R\in\CC$.

Conversely, suppose that $c\in\QQ$. Choose a positive integer $N$
such that
\[
 N\left(c-\frac12\right),\qquad
 N\left(c+\frac12\right)
\]
are integers. Then
\[
 I=t^N x^{N(c-1/2)}y^{-N(c+1/2)}
\]
is a non-constant rational function and
\[
 \frac{dI}{I}=N\theta.
\]
Thus $I$ is a rational first integral. This proves the equivalence.
\end{proof}

\end{document}